\newtheorem{thm}{Theorem}[section]
\newtheorem{df}[thm]{Definition}
\newtheorem{prop}[thm]{Proposition}
\newtheorem{lem}[thm]{Lemma}
\newcommand{\Div}{\operatorname{Div}}
\newcommand{\Pic}{\operatorname{Pic}}
\newcommand{\ord}{\operatorname{ord}}
\newcommand{\pp}{\mathbb{P}}
\newcommand{\af}{\mathbb{A}}
\newcommand{\zz}{\mathbb{Z}}
\begin{document}

\title[A height bound for regular affine automorphisms]
{An upper bound for the height for regular affine automorphisms
of $\af^n$}

\author{Chong Gyu Lee}

\keywords{regular affine automorphism, height, Kawaguchi's
conjecture}

\date{\today}

\subjclass{Primary: 37P30 Secondary: 11G50, 32H50,  37P05}

\address{Department of Mathematics, Brown University, Providence RI 02912, US}

\email{phiel@math.brown.edu}

\maketitle

%%%%%%%%%%%%%%%%%%%%%%%%%%%%%%%%%%%%%%%%%%%%%%%%%%%%%%%%%%%%%%%%%%%%%%
\begin{abstract}
In \cite{K}, Kawaguchi proved a lower bound for height of
$h\bigl(f(P)\bigr)$ when~$f$ is a regular affine automorphism of
$\af^2$, and he conjectured that a similar estimate is also true for
regular affine automorphisms of $\af^n$ for $n\geq 3$. In this paper
we prove Kawaguchi's conjecture. This implies that Kawaguchi's theory
of canonical heights for regular affine automorphisms of projective
space is true in all dimensions.
\end{abstract}

%%%%%%%%%%%%%%%%%%%%%%%%%%%%%%%%%%%%%%%%%%%%%%%%%%%%%%%%%%%%%%%%%%%%%%
\section{Introduction}

Let $\zeta$ be a rational map on $\pp^n$ and $Z(\zeta)$ be the
indeterminacy locus of $\zeta$.  We will say that a family of rational
maps $\{ \zeta_i \}$ is jointly regular if $\bigcap Z(\zeta_i)$ is
empty.  Silverman~\cite{S1} proved the following result for jointly
regular maps.

\begin{thm}
If $\zeta_1 , \cdots , \zeta_m : \pp^n \rightarrow
\pp^n$  is a family of  jointly regular maps, then
\[
  \sum_{i=1}^m \dfrac{1}{d_i} h(\zeta_i(P)) \ge h(P)  - C,
\]
where $d_i$ be the degree of $\zeta_i$, and the constant~$C$ is
independent of the point~$P$.
\end{thm}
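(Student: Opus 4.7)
The plan is to apply an effective form of Hilbert's Nullstellensatz to the defining polynomials of the $\zeta_i$, extract a place-by-place inequality, and then sum it over all places of a number field.

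Write each map in coordinates as $\zeta_i = [F_{i,0} : \cdots : F_{i,n}]$ with $F_{i,j}$ homogeneous of degree $d_i$ and having no nontrivial common factor; then $Z(\zeta_i) = V(F_{i,0}, \ldots, F_{i,n})$, and the joint-regularity hypothesis translates to $V(F_{i,j} : i,j) = \emptyset$ in $\pp^n$. By the homogeneous Nullstellensatz, the ideal $(F_{i,j})$ contains a power of the irrelevant ideal, so there exist an integer $N$ and homogeneous polynomials $G_{i,j,k}$ of degree $N - d_i$ such that
\[
 x_k^N = \sum_{i,j} G_{i,j,k}(x)\, F_{i,j}(x), \qquad k = 0, \ldots, n.
\]
To align the exponents $1/d_i$ appearing in the conclusion, it is convenient to apply the Nullstellensatz instead to the equal-degree family $\{F_{i,j}^{L/d_i}\}$, where $L = \operatorname{lcm}(d_1, \ldots, d_m)$.

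Fix a number field $K$ containing the coordinates of $P$ and all polynomial coefficients, and let $(x_0, \ldots, x_n)$ be an affine representative of $P$. At each place $v$ of $K$, apply the appropriate triangle inequality to the identity above and the elementary bound $|G_{i,j,k}(x)|_v \le \|G_{i,j,k}\|_v \cdot |P|_v^{N-d_i}$, where $|P|_v := \max_\ell |x_\ell|_v$. Setting $|\zeta_i(P)|_v := \max_j |F_{i,j}(x)|_v$ and taking the maximum over $k$ yields the local inequality
\[
 |P|_v^N \;\le\; C_v \max_i \bigl( |P|_v^{N-d_i}\, |\zeta_i(P)|_v \bigr),
\]
with $C_v = 1$ for all but finitely many $v$. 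Taking logarithms, weighting by $[K_v:\mathbb{Q}_v]/[K:\mathbb{Q}]$, and summing over all places of $K$ turns this into a global statement: the product formula makes $\sum_v w_v \log |\zeta_i(P)|_v$ equal to $h(\zeta_i(P))$ independently of the chosen representative, and similarly $\sum_v w_v \log |P|_v = h(P)$, so the summed local inequality becomes a global bound relating $h(P)$ to the heights $h(\zeta_i(P))/d_i$.

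The main obstacle lies precisely in this globalization step. The local Nullstellensatz singles out a maximum over $i$ at each place $v$, and which index attains the maximum depends on $v$, whereas the conclusion of the theorem demands a sum over $i$ of global heights. Bridging this gap uses the non-negativity of Weil heights, $h(\zeta_i(P)) \ge 0$, which permits a maximum on the height side to be replaced by a sum without incurring any additional $P$-dependent error, thereby producing the uniform constant $C$.
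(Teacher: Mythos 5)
Your skeleton (homogeneous Nullstellensatz applied to the family $\{F_{i,j}\}$, the lcm trick to equalize degrees, the local inequality $|P|_v^L \le C_v \max_i |\zeta_i(P)|_v^{L/d_i}$, then summation over places) is exactly the right one --- note that the paper itself gives no proof of this theorem, it simply cites Silverman, whose argument runs along these lines. But the step you yourself flag as the crux is where the proposal breaks. You cannot pass from the place-by-place inequality, whose maximizing index $i$ depends on $v$, to the sum $\sum_i \frac{1}{d_i}h(\zeta_i(P))$ by invoking the \emph{global} non-negativity $h(\zeta_i(P))\ge 0$: the quantity $\sum_v w_v \max_i \frac{1}{d_i}\log\max_j|F_{i,j}(x)|_v$ can exceed $\sum_i \frac{1}{d_i}h(\zeta_i(P))$ by an amount comparable to $h(P)$, because at the places where an index $i$ is not the maximizer its local term may be hugely negative (this happens exactly when $P$ is $v$-adically close to $Z(\zeta_i)$), and truncating those places away is not controlled by $h(\zeta_i(P))\ge0$. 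In fact the literal statement, with no hypothesis beyond joint regularity, is false, so no proof of it can close this gap: take on $\pp^2$ the degree-$2$ maps $\zeta_1=[x^2:y^2:(x+y)^2]$ and $\zeta_2=[y^2:z^2:(y+z)^2]$, with $Z(\zeta_1)=\{[0:0:1]\}$ and $Z(\zeta_2)=\{[1:0:0]\}$ disjoint; for $P=[1:0:N]$ one has $\zeta_1(P)=[1:0:1]$ and $\zeta_2(P)=[0:1:1]$, so both images have height $0$ while $h(P)=\log N\to\infty$.

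What is missing is the hypothesis, implicit in the paper's setting and explicit in Silverman's theorem, that the $\zeta_i$ are extensions of \emph{affine} morphisms, i.e.\ $\zeta_i=[x_0^{d_i}:F_{i,1}:\cdots:F_{i,n}]$ and $P\in\af^n$. With that hypothesis your bridge can be repaired, but by \emph{local} rather than global non-negativity: choose the representative $x=(1,a_1,\dots,a_n)$ of $P$, so that at every place $\max_j|F_{i,j}(x)|_v\ge |x_0^{d_i}|_v=1$, hence each local term $\frac{1}{d_i}\log\max_j|F_{i,j}(x)|_v$ is $\ge 0$ and the pointwise inequality $\max_i\le\sum_i$ is valid place by place; summing over $v$ (using the product formula to identify $\sum_v w_v\log\max_j|F_{i,j}(x)|_v$ with $h(\zeta_i(P))$ and $\sum_v w_v\log\max(1,|a_1|_v,\dots,|a_n|_v)$ with $h(P)$) then gives the stated bound. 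So: correct Nullstellensatz setup, but the max-to-sum globalization as you justify it is a genuine gap, and it can only be fixed after restoring the affine-morphism hypothesis.
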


In special cases, we can improve the bound. Suppose that
\[f : \af^n \rightarrow \af^n\]
is an affine automorphism with inverse function $f^{-1}$.  Since
$\af^n$ is a dense subset of $\pp^n$, we can find $\phi_0$ and
$\psi_0$, rational functions that extend $f$ and $f^{-1}$,
\[
 \xymatrix{
     \pp^n    \ar@{-->}[d]
       & \pp^n  \ar@{-->}[l]_{\psi_0} \ar@{-->}[r]^{\phi_0}  \ar@{-->}[d]
       &  \pp^n  \ar@{-->}[d] \\
     \af^n
     & \af^n \ar[l]_{f^{-1}}    \ar[r]^{f}
     & \af^n
     }
\]
We say $f$ is a regular affine automorphism if $\{\phi_0,\psi_0 \}$ is
jointly regular. Kawaguchi showed in \cite{K} that Silverman's result
can be improved when we have a regular affine automorphism of~$\af^2$.

\begin{thm}
\label{thm:kawa1}
Let $f$ be a regular affine automorphism on $\af^2$. Then,
\[
   \dfrac{1}{\deg f} h(f(P)) + \dfrac{1}{\deg f^{-1}}h(f^{-1}(P))
   > \left( 1+ \dfrac{1}{\deg f \deg f^{-1}} \right) h(P) + C
\]
\end{thm}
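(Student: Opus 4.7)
The plan is to combine Silverman's jointly regular height bound with the two-dimensional geometry of regular affine automorphisms of $\af^2$ to extract the extra $\tfrac{1}{de}h(P)$ improvement over the baseline.

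First I would extend $f$ and $f^{-1}$ to rational self-maps $\phi,\psi\colon\pp^2\dashrightarrow\pp^2$, written in homogeneous coordinates as $\phi=[F_0:F_1:F_2]$ with $\deg F_i=d=\deg f$ and $\psi=[G_0:G_1:G_2]$ with $\deg G_j=e=\deg f^{-1}$. The regularity hypothesis $Z(\phi)\cap Z(\psi)=\emptyset$ forces the indeterminacy loci $I_+=Z(\phi)$ and $I_-=Z(\psi)$ to be distinct single points on the line at infinity $L_\infty\subset\pp^2$, and $\phi$ collapses $L_\infty\setminus\{I_+\}$ onto $I_-$ (and symmetrically for $\psi$). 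Applying Silverman's theorem to the pair $\{\phi,\psi\}$ gives the baseline estimate
\[
   \tfrac{1}{d}h(\phi(P))+\tfrac{1}{e}h(\psi(P))\;\ge\;h(P)+C_0.
\]

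To sharpen the coefficient of $h(P)$, I would use the composition identities $\phi\circ\psi=\mathrm{id}$ and $\psi\circ\phi=\mathrm{id}$, which in homogeneous coordinates read
\[
  F_i(G(X))=X_i\,R(X), \qquad G_j(F(X))=X_j\,R'(X),
\]
for homogeneous forms $R$ and $R'$ of degree $de-1$. Since $\phi\circ\psi$ and $\psi\circ\phi$ are the identity away from $L_\infty$, both $R$ and $R'$ vanish only on $L_\infty$ and are therefore powers of its defining linear form. Revisiting Silverman's effective Nullstellensatz representation $\sum A_{ij}F_i+\sum B_{ij}G_j=X_j^N$ and reducing it modulo the ideal generated by these composition identities should allow one to lower the exponent $N$; the resulting saving translates, after taking heights and dividing, to an extra $\tfrac{1}{de}h(P)$ term on the right-hand side of the baseline inequality.

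The main obstacle will be the precise bookkeeping required to obtain the exact factor $\tfrac{1}{de}$ rather than a weaker multiple. The argument crucially needs $R$ and $R'$ to be powers of a \emph{linear} form, which is special to the two-dimensional case because $I_\pm$ are points of codimension equal to the ambient dimension. For $n\ge 3$ the indeterminacy loci can be positive-dimensional and the forms $R,R'$ no longer decompose so cleanly, which is presumably why Kawaguchi's original argument does not extend verbatim and why the paper develops a new strategy for the main result.
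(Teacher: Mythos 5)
This theorem is not proved in the paper at all: it is quoted from Kawaguchi's paper \cite{K}, and the paper's own contribution is the general statement for $\af^n$ (Theorem~\ref{mainthm}), proved by a divisorial argument on resolutions of indeterminacy. So the relevant comparison is with that geometric proof, which subsumes the $n=2$ case. Your setup is fine as far as it goes: regularity gives disjoint indeterminacy loci on the line at infinity, and the collapsing statement $\phi\bigl(L_\infty\setminus Z(\phi)\bigr)\subset Z(\psi)$ is exactly Lemma~\ref{inclusion}; the composition identity $G_j(F(X))=X_jX_0^{de-1}$ also appears verbatim in that lemma's proof. (One small unjustified claim: regularity alone forces disjointness of $Z(\phi)$ and $Z(\psi)$, not that each is a single point; for $\af^2$ that extra fact needs Jung's theorem or a similar input, though it is not where the real difficulty lies.)

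The genuine gap is the decisive quantitative step. Everything beyond Silverman's baseline inequality is contained in the extra term $\tfrac{1}{de}h(P)$, and your mechanism for producing it --- ``reducing Silverman's effective Nullstellensatz representation modulo the composition identities should allow one to lower the exponent $N$'' --- is an expression of hope, not an argument. You neither exhibit the improved identity nor explain how a lowered exponent would convert into the coefficient $1+\tfrac{1}{de}$ in front of $h(P)$ (as opposed to merely changing the additive constant, or giving a weaker multiple); the fact that $R,R'$ are powers of $X_0$ only re-encodes that $\psi\circ\phi$ is the identity off infinity, and it is not shown that this improves the local estimates uniformly at every place in the way required. By contrast, the known proofs (Kawaguchi in dimension $2$, and Theorem~\ref{mainthm} here in general) obtain the exact factor geometrically: resolve the indeterminacy, identify the essential exceptional divisor $E_t$ (the unique one with $\phi_*E_t=H$, appearing with coefficient $1$ in $\phi^*H$ and $d'$ in $\pi_V^*H$, Theorem~\ref{essential} and Lemma~\ref{pullback}), pass to a common resolution $U$, and verify via Lemma~\ref{pull-push} that $\tfrac{1}{d}\phi_U^*H+\tfrac{1}{d'}\psi_U^*H-\bigl(1+\tfrac{1}{dd'}\bigr)\pi_U^*H$ is effective, after which the height inequality follows from the Weil height machine. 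If you want to salvage your algebraic route, you would have to carry out the Nullstellensatz bookkeeping explicitly and prove the saving is exactly of size $\tfrac{1}{de}h(P)$; that is the entire content of the theorem and it is missing from the sketch.
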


Kawaguchi conjectured that Theorem~\ref{thm:kawa1} is true for regular
affine automorphisms of $\af^n$ for all $n\geq 2$.  In this paper we
prove Kawaguchi's conjecture. From now on, we will let $H$ be the
hyperplane at infinity, $f$ and $f^{-1}$ an affine automorphism and
its inverse, and $\phi_0$ and $\psi_0$ morphisms that are meromorphic
extensions of $f$ and $f^{-1}$. By $Z(\zeta)$ we will mean the
indeterminacy locus of the rational map $\zeta$.

\par\noindent\emph{Acknowledgements}.\enspace
The author would like to thank Joseph H. Silverman for his advice
and Dan Abramovich for his assistance, in particular with the proof of Lemma~\ref{pull-push}.

%%%%%%%%%%%%%%%%%%%%%%%%%%%%%%%%%%%%%%%%%%%%%%%%%%%%%%%%%%%%%%%%%%%%%%
\section{Resolving Indeterminacy and the Essential Divisor of $\phi$ }

As a standard application of blowing up subschemes, we know that for
any rational map $f : \pp^n \rightarrow \pp^n$, there is a blowup $V$
of $\pp^n$ and a birational morphism $\pi: V \rightarrow \pp^n$ such
that $f \circ \pi$ is a morphism. In general, not all birational
morphism are decomposed into monoidal transformations (blowing up
along closed subvarieties). Fortunately, we have Hironaka's theorem on
resolution of indeterminacy.

\begin{thm}
\label{indeterminacy}
Let $\zeta:V \rightarrow W$ be a rational map between smooth
proper varieties. Then there is a sequence of varieties
$V_i$ such that:
\begin{enumerate}
   \item $V_i$ is a blowup of $V_{i-1}$ along a smooth
       irreducible subvariety.
   \item The meromorphic extension $\zeta_r$ of $f$ on $V_r$ is a
       morphism.
\end{enumerate}
\[
   \xymatrix{
   V_r \ar[d] \ar[rdd]^{\zeta_r} \ar@/_2pc/[dd]_{\pi} \\
   V_i \ar[d] \ar@{-->}[rd]|{\zeta_i} \\
   V=V_0 \ar@{-->}[r]_{\zeta=\zeta_0}  &W }
\]
\end{thm}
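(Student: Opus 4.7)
The plan is to reduce the statement to Hironaka's principalization of ideal sheaves and then to invoke that result as a black box. First I would reduce to the case $W=\pp^N$: since $W$ is proper, Chow's lemma supplies a birational projective variety $W' \to W$, and replacing $W$ by $W'$ (and then embedding $W'$ into some $\pp^N$) does not affect the indeterminacy locus of $\zeta$. Because the image of $\zeta$ on its domain of definition lies in $W$, any morphism extension $V_r \to \pp^N$ will factor through $W$ on taking closures, so it suffices to solve the problem for $\pp^N$-valued maps.

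Next, I would translate indeterminacy into an ideal-sheaf problem. A rational map $\zeta:V\dashrightarrow \pp^N$ is given by a sublinear system $W_\zeta\subseteq |D|$ of a Cartier divisor $D$, and its base locus carries a natural scheme structure via a coherent ideal sheaf $\mathcal{I}\subseteq\ox_V$ (locally generated by the sections defining $\zeta$ after trivializing $\ox_V(D)$). The support of $\ox_V/\mathcal{I}$ is exactly $Z(\zeta)$, and for any birational morphism $\pi:V'\to V$ the composed map $\zeta\circ\pi$ extends to a morphism into $\pp^N$ if and only if the inverse image ideal $\mathcal{I}\cdot\ox_{V'}$ is locally principal.

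Now I would invoke Hironaka's principalization theorem: on a smooth variety in characteristic zero, any coherent ideal sheaf can be made invertible after a finite sequence of blowups along smooth irreducible centers, each contained in the cosupport of the current weak transform of the ideal. Smoothness is preserved at every stage because blowing up a smooth subvariety of a smooth variety yields a smooth variety. Applying this to $\mathcal{I}$ produces the desired tower $V_r\to\cdots\to V_0=V$, on which the strict transform of $\zeta$ has empty base locus and hence is a morphism.

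The main obstacle is of course Hironaka's principalization theorem itself, which is very deep and which the paper wisely imports rather than reproves. Beyond that, the only technical nuisances are ensuring that the successive centers can be taken irreducible (one may split a disconnected smooth center into its connected components and blow them up in turn, without altering the effect on $\mathcal{I}$) and verifying, in the reduction step, that the extended morphism genuinely lands in $W$ rather than merely in $\pp^N$; this last point follows from properness of $W$ together with the density of the domain of definition of the original $\zeta$.
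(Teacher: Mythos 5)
Your proposal is correct, and at bottom it rests on the same black box as the paper: the paper's entire proof is the citation of Hironaka \cite[Main Theorem II, Corollary 3]{Hi}, which is exactly the statement that points of indeterminacy can be eliminated by a finite sequence of blowups along smooth centers. What you do differently is reconstruct that corollary from Hironaka's principalization theorem: you reduce to a projective target via Chow's lemma and an embedding into $\pp^N$, encode the indeterminacy as the base-locus ideal sheaf of the defining linear system, and observe that the map becomes a morphism on any model where the inverse image ideal is invertible. This buys a self-contained explanation of \emph{why} elimination of indeterminacy follows from principalization, and it treats proper non-projective targets explicitly, at the cost of being longer than the paper's one-line citation; it also makes visible the characteristic-zero hypothesis that both arguments need. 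One small inaccuracy to fix: lifting $\zeta$ through the birational morphism $W'\to W$ can a priori enlarge the locus of indefinition, so the claim that replacing $W$ by $W'$ ``does not affect the indeterminacy locus'' is not quite right. This is harmless for the theorem, since any morphism $V_r\to\pp^N$ obtained by resolving the lifted map has image contained in the closed subvariety $W'$ (it agrees with the lift on a dense open set), and composing with the morphism $W'\to W$ gives a morphism $V_r\to W$ extending $\zeta$; but the justification should be phrased that way rather than as invariance of the indeterminacy locus.
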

\begin{proof}
    \cite[Main Theorem II, Corollary 3]{Hi}
\end{proof}

\begin{df}
We call a birational map $\pi:X \rightarrow Y$ a \emph{monoidal
transformation} if $X$ is a blowup of $Y$ whose center is a
subvariety.
\end{df}

\begin{df}
Let $\pi : W \rightarrow V$ be a birational morphism with center the
scheme whose ideal sheaf is $\mathfrak{I}$, and let $D$ be an
irreducible divisor on $V$. We define the \emph{proper transformation of
$D$} to be
\[
  \overline{\pi^{-1} (D \cap U)},
\]
where $U= V \smallsetminus Z \left( \mathfrak{I}\right)$, and where $Z \left(
\mathfrak{I}\right)$ is the underlying subvariety that is the zero set
of the ideal~$\mathfrak{I}$.
\end{df}

We now assume that $V_0,\ldots,V_k$ is a sequence of varieties as in
Theorem~\ref{indeterminacy} which resolves the indeterminacy of
$\phi_0$. We let $V=V_k$, we write $\pi_V : V \rightarrow \pp^n$ for
the birational morphism that is a composition of monoidal
transformations, and we let $\phi = \phi_0 \circ \pi_V:V \rightarrow
\pp^n$ be the morphism extending~$\phi_0$.
We prove a lemma describing the Picard group of~$V$.

\begin{lem}
\label{indepofexceptdivs}
With notation as above, let $H_V\in\Div(V)$ be the proper transform
in~$V$ of~$H$, and for each $1\le i\le k$, let~$E_i\in\Div(V)$ be the
proper transform in~$V$ of the exceptional divisor of the monoidal
transformation~$V_i\to V_{i-1}$. Then
\[
  \Pic(V) = \zz H_V \oplus \zz E_1 \oplus \zz E_2
    \oplus \dots \oplus \zz E_k.
\]
In other words, $\Pic(V)$ is a free abelian group of rank $k+1$ generated
by~$H_V$ and~$E_1,\ldots,E_k$.
\end{lem}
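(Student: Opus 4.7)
The plan is to induct on the length of the chain of blowups: for each $0 \le i \le k$, I will show that $\Pic(V_i)$ is the free abelian group generated by the proper transforms in $V_i$ of $H$ and of $\tilde{E}_1, \ldots, \tilde{E}_i$, where $\tilde{E}_j$ denotes the exceptional divisor of $\pi_j : V_j \to V_{j-1}$. Taking $i=k$ is the statement of the lemma. The base case $i=0$ is just $\Pic(\pp^n) = \zz H$.

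The essential input is the classical formula for the Picard group of a blowup: if $\rho : X' \to X$ is the blowup of a smooth variety $X$ along a smooth irreducible subvariety of codimension at least two, with exceptional divisor $E$, then
\[
   \Pic(X') \;\cong\; \rho^*\Pic(X) \,\oplus\, \zz[E].
\]
A blowup along a codimension-one subvariety is an isomorphism, so without loss of generality every $\pi_i$ in Theorem~\ref{indeterminacy} has center of codimension $\geq 2$; since the centers are smooth, each $V_i$ remains smooth and the formula applies at every stage.

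For the inductive step, fix $i$ and, for a divisor $D$ on $V_{i-1}$, write $D^{\sharp}$ for its proper transform in $V_i$. Combining the blowup formula with the induction hypothesis gives
\[
   \Pic(V_i) \;=\; \zz\,\pi_i^* H^{(i-1)} \,\oplus\, \bigoplus_{\ell=1}^{i-1} \zz\,\pi_i^* E_\ell^{(i-1)} \,\oplus\, \zz\,\tilde{E}_i,
\]
where $H^{(i-1)}$ and $E_\ell^{(i-1)}$ denote the proper transforms in $V_{i-1}$. The standard relation $\pi_i^* D = D^{\sharp} + m_D\, \tilde{E}_i$, with $m_D$ the multiplicity of $D$ along the center of $\pi_i$, together with the observation that $\tilde{E}_i$ is itself the proper transform $E_i^{(i)}$, shows that the change of basis from the pullback generators above to the proper-transform generators $H^{(i)}_V, E_1^{(i)}, \ldots, E_i^{(i)}$ is upper triangular with $1$'s on the diagonal. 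This matrix is therefore unimodular, so the new list is again a free $\zz$-basis, closing the induction.

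The only real obstacle is careful bookkeeping. I need to know that $H$ and each $\tilde{E}_j$ remain honest prime divisors in every later $V_l$, so that their classes supply single generators rather than sums; this follows because the proper transform of a prime divisor under a blowup of a smooth center is again prime (it is the closure of the irreducible open set on which the blowup is an isomorphism with $D$). All of the geometric content sits inside the blowup formula, and everything else is linear algebra.
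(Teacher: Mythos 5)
Your proof is correct and follows essentially the same route as the paper: both rest on the standard blowup formula $\Pic(\tilde X)=\pi^*\Pic(X)\oplus\zz E$ applied inductively along the chain $V_0=\pp^n,\dots,V_k=V$. The only difference is that you spell out the unipotent change of basis from pullbacks to proper transforms (and the primality of proper transforms), details the paper subsumes under ``an easy induction.''
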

\begin{proof}
Let $\pi:\tilde X\to X$ be the blowup of a smooth variety along a
smooth subvariety and let~$E$ denotes the exceptional divisor of the
blowup. Then it is well known that $\Pic(\tilde
X)=\pi^*\Pic(X)\oplus\zz E$; see for example~\cite[II.8.Ex~5]{H}.
Applying this fact repeatedly, starting from~$V_0=\pp^n$
and~$\Pic(V_0)=\zz H$, an easy induction gives the desired result.
\end{proof}

Similarly, for $f^{-1}$, we let $W=W_l$ be a variety which
resolves the indeterminacy of $\psi_0$, we let $\pi_W : W \rightarrow
\pp^n$ be a birational morphism that is a composition of monoidal
transformations, we write $\psi=\psi_0 \circ \pi_W:W \rightarrow
\pp^n$ for the morphism extending~$\psi$, and we let
\[
  \Pic(W) = \zz H_W \oplus \zz F_1 \oplus \zz F_2
    \oplus \dots \oplus \zz F_l,
\]
where $H_W\in\Div(W)$ is the proper transform
in~$W$ of~$H$, and for each $1\le j\le l$, let~$F_j\in\Div(W)$ be the
proper transform in~$W$ of the exceptional divisor of the monoidal
transformation~$W_j\to W_{j-1}$.

This is summarized in the following commutative diagrams.
\[
   \xymatrix{
      V= V_k \ar[d] \ar[rdd]^{\phi=\phi_k} \ar@/_2pc/[dd]_{\pi_V} \\
      V_i \ar[d] \ar@{-->}[rd]|{\phi_i} \\
      \pp^n \ar@{-->}[r]_{\phi_0}  &\pp^n }
      , \quad
   \xymatrix{
      W=W_l \ar[d] \ar[rdd]^{\psi = \psi_l} \ar@/_2pc/[dd]_{\pi_W} \\
      W_j \ar[d] \ar@{-->}[rd]|{\psi_j} \\
      \pp^n \ar@{-->}[r]_{\psi_0}  &\pp^n }
\]

Here are some lemmas which will help us to define the essential
divisor and to prove Kawaguchi's conjecture in
section~\ref{proofkawconj}.

\begin{lem}\label{inclusion}
\[
  \phi_0 (H \smallsetminus Z(\phi_0)) \subset  Z(\psi_0).
\]
\end{lem}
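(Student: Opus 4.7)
My plan is to argue by contradiction. Suppose there exists $P\in H\setminus Z(\phi_0)$ with $Q:=\phi_0(P)\notin Z(\psi_0)$; I will derive a contradiction by computing the multiplicity of the pullback divisor $\phi_0^{*}H$ along $H$ at $P$ in two different ways.

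First I would establish that $\phi_0$ is a local isomorphism at $P$. Choose a Zariski neighborhood $U$ of $P$ with $U\cap Z(\phi_0)=\emptyset$ and $\phi_0(U)\cap Z(\psi_0)=\emptyset$; then both $\phi_0|_U$ and $\psi_0\circ\phi_0|_U$ are morphisms into $\pp^n$, and on the dense open subset $U\cap\af^n$ the composition equals $f^{-1}\circ f=\mathrm{id}$. Since two morphisms to the separated scheme $\pp^n$ that agree on a dense subset of $U$ must coincide, $\psi_0\circ\phi_0=\mathrm{id}_U$, so $\phi_0|_U$ is an isomorphism onto its image with inverse $\psi_0$. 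Because $\phi_0|_{\af^n}=f$ sends $\af^n$ into $\af^n$, the preimage $\phi_0^{-1}(H)$ is disjoint from $\af^n$; since $\phi_0$ is dominant and $H$ is irreducible, this forces $\phi_0^{-1}(H)=H\setminus Z(\phi_0)$ set-theoretically. Pulling back a local equation $t$ of $H$ at $Q$ through the isomorphism $\phi_0|_U$ then yields a local equation of $U\cap H$ at $P$, so the multiplicity of $\phi_0^{*}H$ along $H$ at $P$ equals $1$.

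Next I would compute $\phi_0^{*}H$ globally. Writing $f=(f_1,\dots,f_n)$ with $d=\deg f=\max_i\deg f_i$ and letting $\tilde f_i$ denote the degree-$d$ homogenization of $f_i$, we have $\phi_0=[X_0^d:\tilde f_1:\cdots:\tilde f_n]$ in homogeneous coordinates $[X_0:\cdots:X_n]$ with $H=\{X_0=0\}$. Any common factor of these $n+1$ polynomials must divide $X_0^d$ and hence be a power of $X_0$, but some $f_i$ has degree exactly $d$, so the corresponding $\tilde f_i$ has nonzero restriction to $H$, and the coordinates are coprime. Hence $\phi_0^{*}H=\operatorname{div}(X_0^d)=d\cdot H$ on $\pp^n\setminus Z(\phi_0)$, giving multiplicity $d$ at $P$. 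Combined with the previous step this forces $d=1$, contradicting the standing assumption $\deg f\ge 2$ that is in force for the regular affine automorphisms of interest in the paper.

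The main technical point requiring care will be the multiplicity comparison at $P$---specifically, verifying coprimality of the $n+1$ homogeneous components of $\phi_0$, and that pullback through a local isomorphism preserves local defining equations. A secondary point is that the lemma tacitly assumes $\deg f\ge 2$: in the affine-linear case $\phi_0$ and $\psi_0$ are everywhere-defined automorphisms of $\pp^n$, the indeterminacy loci are empty, and this situation lies outside the scope of Kawaguchi's conjecture.
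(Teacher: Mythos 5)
Your proof is correct, but it takes a genuinely different route from the paper's. The paper argues directly: writing $\phi_0=(X_0^d,F_1,\dots,F_n)$ and $\psi_0=(X_0^e,G_1,\dots,G_n)$, it computes $\psi_0\circ\phi_0=(X_0^{de},X_0^{de-1}X_1,\dots,X_0^{de-1}X_n)$, so at a point $P\in H\smallsetminus Z(\phi_0)$ every homogeneous coordinate of $\psi_0$ vanishes at $\phi_0(P)$, which is precisely the statement $\phi_0(P)\in Z(\psi_0)$. You instead argue by contradiction through a ramification count: if $Q=\phi_0(P)\notin Z(\psi_0)$, then $\psi_0\circ\phi_0$ is the identity near $P$ (density of $\af^n$ plus separatedness), so $\phi_0$ induces an isomorphism of local rings at $P$ (the left inverse gives a surjection $\ox_{\pp^n,Q}\to\ox_{\pp^n,P}$ of regular local domains of equal dimension, hence an isomorphism --- this is the one spot you phrase loosely as ``isomorphism onto its image,'' but the verification is routine), forcing $\operatorname{ord}_H\phi_0^*H=1$, while the coprime presentation $[X_0^d:\tilde f_1:\cdots:\tilde f_n]$ gives $\operatorname{ord}_H\phi_0^*H=d$. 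What your approach buys is a geometric explanation: $\phi_0$ is ramified to order $d$ along $H$, so it cannot be locally invertible at any point of $H$, hence must send $H$ into the locus where $\psi_0$ is undefined. What it costs is the explicit use of $d\ge2$; but note the paper's own computation also needs $de\ge2$ (the exponent $de-1\ge1$), a hypothesis never stated in the paper, and indeed for affine-linear $f$ both indeterminacy loci are empty and the lemma is false as written. So your closing caveat about the tacit assumption $\deg f\ge 2$ identifies a genuine implicit hypothesis shared by both arguments rather than a defect of yours.
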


\begin{proof}
Let
\[
  \phi_0=( X_0^d ,F_1 , \cdots , F_n ), \quad
  \psi_0 = ( X_0^e , G_1 , \cdots , G_n ).
\]
Then using the fact that~$\phi_0$ and~$\psi_0$ are inverse maps, we have
\[
  \psi_0\circ\phi_0
  = \left( X_0^{de},G_1(X_0^d,F_1,\cdots,F_n),\cdots \right)
  = \left( X_0^{de},X_0^{de-1}X_1 , \cdots, X_0^{de-1}X_n \right).
\]
Now let $P=[0,x_1,\cdots,x_n] \in H \smallsetminus Z(\phi_0)$. Then,
\[
  \psi_0\left( \phi_0(P) \right)
  = \left( 0,G_1(\phi_0(P)),\cdots \right)
  = \left( 0^{de},0^{de-1}X_1 , \cdots, 0^{de-1}X_n \right),
\]
and hence $\phi_0(P) \in Z(\psi_0)$.
\end{proof}

\begin{lem}\label{identity}
\[{\phi}_* {\phi}^*H = H.\]
\end{lem}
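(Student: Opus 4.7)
The plan is to establish this as a direct consequence of the projection formula for a proper birational morphism. Concretely, once one knows that $\phi \colon V \to \pp^n$ is birational, it is a standard fact that $\phi_* \phi^* D = D$ for every Cartier divisor $D$ on $\pp^n$, and taking $D = H$ yields the lemma.

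The first step is to verify that $\phi$ is birational. By construction $\phi = \phi_0 \circ \pi_V$. The morphism $\pi_V$ is a composition of monoidal transformations and is therefore birational. The rational map $\phi_0$ restricts to the biregular automorphism $f \colon \af^n \to \af^n$ on the dense open set $\af^n \subset \pp^n$, so $\phi_0$ is birational as well. Hence $\phi$ is a proper birational morphism between smooth projective varieties.

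The second step is to carry out the projection formula in this concrete setting by decomposing $\phi^* H$ into its prime components. Write $\phi^* H = \widetilde{H} + \sum_i a_i E_i$, where $\widetilde{H}$ is the unique component of $\phi^* H$ that dominates $H$ and the $E_i$ are the remaining components, each $\phi$-exceptional in the sense that $\phi(E_i) \subset \pp^n$ has codimension at least two. Because the image of the exceptional locus of the birational morphism $\phi$ has codimension at least two in $\pp^n$, the generic point of $H$ lies in the isomorphism locus of $\phi$, and consequently $\widetilde{H}$ appears in $\phi^* H$ with multiplicity exactly one and $\phi|_{\widetilde{H}} \colon \widetilde{H} \to H$ is birational.

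Applying the definition of the pushforward of divisors termwise then gives $\phi_* \widetilde{H} = H$ and $\phi_* E_i = 0$ for every $i$, so $\phi_* \phi^* H = H$ as desired. The only substantive point requiring care is the multiplicity-one assertion for $\widetilde{H}$; this follows from the standard codimension bound on the image of the exceptional locus of a birational morphism between normal varieties. Beyond that, the argument is a routine unwinding of definitions.
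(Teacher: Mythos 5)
Your proof is correct, but it follows a genuinely different route from the paper's. You reduce everything to the general structure theory of birational morphisms: since $\pp^n$ is normal and $\phi\colon V\to\pp^n$ is proper and birational, Zariski's main theorem gives an open $U\subset\pp^n$ with complement of codimension at least two over which $\phi$ is an isomorphism; the generic point of $H$ lies in $U$, so exactly one component of $\phi^*H$ dominates $H$, with multiplicity one and mapping birationally, while every other component of $\phi^{-1}(H)$ maps into a proper closed subset of $H$ (hence of codimension $\ge 2$) and pushes forward to zero. This yields $\phi_*\phi^*H=H$ as an equality of cycles, for any prime divisor on the base, and uses nothing about $f$ being an affine automorphism. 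The paper argues differently: it first notes that the images of the exceptional divisors are contained in $H$ (essentially the content of Lemma~\ref{EinH}), chooses a general hyperplane $H'$ avoiding these images, verifies $\phi_*\phi^*H'=H'$ by hand (the function field extension has degree one because $\phi$ is injective off the exceptional locus), and then transfers the statement to $H$ by the linear equivalence $H\sim H'$ --- a moving-lemma style argument that only gives the identity in $\Pic$, which is all the paper needs. Your version is more general and arguably cleaner, at the cost of invoking the codimension-two bound on the image of the exceptional locus rather than elementary manipulations. Two small remarks: your $\widetilde H$ is in fact the essential exceptional divisor $E_t$ of the paper, not the strict transform $H_V$ (which $\phi$ contracts into $Z(\psi_0)$ by Lemma~\ref{inclusion}), so your labels $E_i$ for ``the remaining components'' clash with the paper's fixed notation and should be renamed; and the assertion that the non-dominating components have image of codimension at least two deserves its one-line justification (an irreducible component of $\phi^{-1}(H)$ either surjects onto $H$ or maps into a proper closed subset of $H$), though this is immediate.
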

\begin{proof}
For this proof, we let $H$, $H_V$ and $E_i$ be specific closed
subvarieties of codimension 1, not linear equivalence classes.  Let
$C_i$ are the image of $E_i$ by $\phi$. Since $\phi$ is an
automorphism on
\[
  \af^n
  \cong V \smallsetminus \left( H_V \cup \left(\bigcup_{i=1}^k E_i\right) \right)
  \cong \pp^n \smallsetminus H,
\]
the $C_i$ are algebraic subsets of $H$. So we can choose a
hyperplane $H'$ which does not contain any of the $C_i$.  Let $H'^V$ be
the preimage of $H'$ by $\phi$. Then, $H'^V$ does not contain any of
$E_i$.  It also means that the $H'^V \cap E_i$ always has
codimension larger than $1$, since $E_i$ is irreducible. And the
codimension of $H'^V\cap H_V$ is also larger than 1. So,
\[
  \overline{\phi\left( H'^V \cap \left( H_V \cup \left(\bigcup_{i=1}^k
  E_i\right) \right) \right)}.
\]
is a closed cycle of codimension larger than $1$, since $\phi_*$ is
a graded group homomorphism. Therefore,
\begin{eqnarray*}
   \phi_*(H'^V ) &=& [K(H'^V):K(\phi(H'^V))] \cdot \overline{\phi(H'^V  )} \\
   &=&  [K(H'^V):K(\phi(H'^V))]
    \cdot \overline{\phi\left( H'^V \smallsetminus \left( H_V \cup
    \bigcup_{i=1}^k E_i \right) \right)}.
\end{eqnarray*}
Moreover, since $\phi$ is one-to-one outside of $H_V$ and $E_i$,
\[
  [K(H'^V):K(\phi(H'^V))]= 1
  \quad {\rm and} \quad
  \phi\left( H'^V \cap \left( H_V \cup
    \bigcup_{i=1}^k E_i \right) \right)
    = H' \smallsetminus H.
\]
Therefore, $\phi_*\phi^*H' = H'$, and hence $\phi_*\phi^*H =
\phi_*\phi^*H' = H'= H$.
\end{proof}

\begin{lem}\label{EinH}
\[
  \phi(H_V)\subset H \quad \text{and} \quad \phi(E_i) \subset H.
\]
\end{lem}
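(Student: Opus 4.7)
The plan is to use the inverse rational map $\psi_0$ as a ``witness'': if $\phi$ were to carry some point of $H_V$ or $E_i$ into $\af^n$, then $\psi_0$ would carry it back into $\af^n$; but the composition $\psi_0\circ\phi$ really is $\pi_V$, which by its very construction maps $H_V$ and each $E_i$ into $H$. The contradiction that results will close the lemma.

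First I would record two preliminary facts. (i) $Z(\psi_0)\subset H$: since $\psi_0$ extends $f^{-1}:\af^n\to\af^n$, it is defined at every point of $\af^n$, so $Z(\psi_0)\subset\pp^n\smallsetminus\af^n=H$. (ii) $\pi_V(H_V)\subset H$ and $\pi_V(E_i)\subset H$ for every $i$. The first inclusion is the definition of the proper transform of $H$. For the second I would induct on $i$: the $i$-th blowup center lies inside the indeterminacy locus of the intermediate lift $\phi_{i-1}$, whose image in $\pp^n$ is contained in $Z(\phi_0)\subset H$, so the exceptional divisor of the $i$-th blowup, and hence its proper transform $E_i$ in $V$, maps under $\pi_V$ into $H$.

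Next I would establish the rational-map identity $\psi_0\circ\phi=\pi_V$ on $V$. By the construction of $\phi$ we have $\phi=\phi_0\circ\pi_V$ as rational maps, and $\psi_0\circ\phi_0=\mathrm{id}_{\pp^n}$ as a rational map since $f^{-1}\circ f=\mathrm{id}_{\af^n}$. Composing yields $\psi_0\circ\phi=\pi_V$ on a dense open of $V$; because $\pp^n$ is separated, this forces equality of the two rational maps, and therefore pointwise equality at every point $P\in V$ where $\psi_0\circ\phi$ is defined, i.e.\ wherever $\phi(P)\notin Z(\psi_0)$.

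Finally I would close by contradiction. Suppose some $P\in H_V\cup\bigcup_{i=1}^k E_i$ satisfies $\phi(P)\in\af^n$. By fact (i), $\phi(P)\notin Z(\psi_0)$, so by the identity just established $\psi_0(\phi(P))=\pi_V(P)$. Since $\psi_0|_{\af^n}=f^{-1}$ maps $\af^n$ to $\af^n$, the left-hand side lies in $\af^n$; but fact (ii) says the right-hand side lies in $H$, a contradiction. Hence $\phi(H_V)\subset H$ and $\phi(E_i)\subset H$. The only real delicacy is being able to evaluate the rational identity $\psi_0\circ\phi=\pi_V$ at the point $P$ in question, and this is exactly what fact (i) plus the contradiction hypothesis $\phi(P)\in\af^n$ supply, so nothing further is needed.
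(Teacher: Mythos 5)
Your proof is correct and follows essentially the same route as the paper: assume some point of $H_V\cup\bigcup E_i$ maps into $\af^n$, use $Z(\psi_0)\subset H$ to evaluate $\psi_0\circ\phi$ there, identify $\psi_0\circ\phi$ with $\pi_V$ wherever defined, and contradict $\pi_V(Q)\in H$. The only difference is cosmetic: you derive $\psi_0\circ\phi=\pi_V$ directly from $\psi_0\circ\phi_0=\mathrm{id}$ and separatedness rather than via the paper's auxiliary lifts $\widetilde{\phi},\widetilde{\psi},\zeta$, and you make explicit the fact $\pi_V(H_V\cup\bigcup E_i)\subset H$ that the paper uses implicitly.
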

\begin{proof}
Suppose that there is a point $ Q \in H_V \cup \left( \bigcup_i E_i
\right) $ such that $\phi(Q) \in \af^n = \pp^n \smallsetminus H$.  Then, since
$Z(\psi_0) \subset H$, $\phi(Q) \not\in Z(\psi_0)$, and hence
$\psi_0(\phi(Q)) \in \af^n$.

We now consider some rational maps. Let $\widetilde{\phi} :V
\dashrightarrow W$ and $\widetilde{\psi} :W \dashrightarrow V$ be
rational maps that extend~$\phi$ and~$\psi$, and let $\zeta$ be the
composition of $\widetilde{\psi}$ and $\widetilde{\phi}$.
\[\xymatrixcolsep{5pc}
   \xymatrix{
   V \ar[d]_{\pi_V} \ar[rd]^{\phi} \ar@/^2pc/[rr]^{\zeta}  \ar@{-->}[r]^{\widetilde{\phi}}
   & W \ar[d]^{\pi_W} \ar[rd]^{\psi} \ar@{-->}[r]^{\widetilde{\psi}}
   & V \ar[d]^{\pi_V} \\
   \pp^n \ar@{-->}[r]_{\phi_0}  \ar@{-->}[d]
   & \pp^n \ar@{-->}[r]_{\psi_0}  \ar@{-->}[d]
   & \pp^n  \ar@{-->}[d] \\
   \af^n \ar@{-->}[r]_{f} \ar@/_2pc/[rr]_{f^{-1} \circ f=id_{\af^n}}
   & \af^n \ar@{-->}[r]_{f^{-1}}
   & \af^n
   }
\]
It is clear that $\psi_0 \circ \phi = \pi_V \circ \zeta$ where they
are defined. Further, $\zeta : V \dashrightarrow V$ is a rational map
which satisfies
\[
  \left( \pi_V \circ \zeta \circ \pi_V^{-1} \right)\big|_{\af^n} = f^{-1}
  \circ f = id_{\af^n}.
\]
But we have a trivial extended morphism $id_V : V \rightarrow V$
which also satisfies
\[
  \left( \pi_V \circ id_V \circ \pi_V^{-1} \right)_{\af^n} =  id_{\af^n},
\]
so that $\zeta = id_V$. Therefore,
\[
  \psi_0\circ\phi(Q) = \pi_V \circ \zeta(Q) = \pi_V(Q) \in H,
\]
which contradicts the assumption.
\end{proof}

\begin{lem}\label{b_j}
Write~$\phi^*H$ in terms of the basis for~$\Pic(V)$, say
\[
  \phi^*H = b_0H_V + \sum_{i=1}^k b_i E_i.
\]
Then $b_i>0$ for all $i$.
\end{lem}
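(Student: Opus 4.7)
The plan is to (i) show that the effective divisor $\phi^*H$ is supported inside $H_V\cup\bigcup_i E_i$, (ii) compare with the basis decomposition of $\Pic(V)$ to deduce $b_i\geq 0$, and (iii) invoke Lemma~\ref{EinH} to promote each of these inequalities to $b_i\geq 1$.

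For step (i), I would use the fact that every center of the monoidal transformations composing $\pi_V:V\to\pp^n$ lies in the indeterminacy locus $Z(\phi_0)\subset H$, so $\pi_V$ restricts to an isomorphism from $V\smallsetminus\bigl(H_V\cup\bigcup_i E_i\bigr)$ onto $\af^n=\pp^n\smallsetminus H$. Over this open set, $\phi=\phi_0\circ\pi_V$ agrees with the affine automorphism $f$, whose image lies in $\af^n$. Hence $\phi^{-1}(H)\subset H_V\cup\bigcup_i E_i$, and in particular this set contains the support of the effective Weil divisor $\phi^*H$ on the smooth variety $V$.

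For step (ii), the support containment just established lets us expand $\phi^*H = c_0 H_V + \sum_{i=1}^k c_i E_i$ as an honest Weil divisor with each $c_0,c_i\geq 0$. Passing to $\Pic(V)$ and comparing with the given expression $\phi^*H = b_0 H_V + \sum_i b_i E_i$, the uniqueness guaranteed by Lemma~\ref{indepofexceptdivs} forces $b_i = c_i \geq 0$ for all $i$.

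For step (iii), Lemma~\ref{EinH} says $\phi(H_V)\subset H$ and $\phi(E_i)\subset H$ for every $i$. Thus each of the prime divisors $H_V, E_1,\ldots,E_k$ is contained in $\phi^{-1}(H)$, and hence in the support of the effective divisor $\phi^*H$. So each appears with multiplicity at least one, giving $b_i\geq 1 > 0$, as claimed. The only delicate point is the support computation in step~(i); it rests on the geometry of the resolution $\pi_V$, namely that the centers of all the monoidal transformations lie over $H$. Once that is in hand, strict positivity is immediate from Lemma~\ref{EinH} together with the effectivity of $\phi^*H$.
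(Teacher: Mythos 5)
Your proposal is correct and takes essentially the same approach as the paper: the heart of both arguments is Lemma~\ref{EinH}, which puts each of $H_V,E_1,\ldots,E_k$ inside $\phi^{-1}(H)$ and hence forces each to occur with multiplicity at least one in the effective divisor $\phi^*H$. Your steps (i)--(ii) simply make explicit the support containment and the identification of Picard-basis coefficients with Weil multiplicities, which the paper absorbs into writing $\phi^*H=\ord_{H_V}(u\circ\phi)\cdot H_V+\sum_i \ord_{E_i}(u\circ\phi)\cdot E_i$ ``by definition''; this extra care is fine but does not change the argument.
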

\begin{proof}
Let $u$ be a uniformizer for the hyperplane $H$ at infinity,
\[
  u(x_0,\cdots, x_n)=x_0.
\]
Then by definition,
\[
  \phi^*H =\ord_{H_V}(u\circ\phi) \cdot H_V + \sum
  \ord_{E_i}(u\circ\phi) \cdot E_i .
\]
Furthermore, $u=0$ on $H$ because it is a uniformizer of $H$, while
$\phi(E_i)\subset H$ and $\phi(H_V) \subset H$ by Lemma~\ref{EinH}.
Therefore,
\[
  u \circ \phi =0
\]
on $H_V$ and on $E_i$, and hence $\ord_{H_V}(u\circ\phi) \geq 1$
and  $\ord_{E_i}(u\circ\phi) \geq 1$.
\end{proof}

\begin{thm}
\label{essential}
Write
\[
   \phi^*H = b_0H_V + \sum_{i=1}^k b_i E_i
\]
as in Lemma~$\ref{b_j}$.  Then there exists a unique index~$t \neq 0$
such that
\[
  b_t=1,\quad \phi_* E_t=H
  \quad\text{and}\quad \phi_*E_j=0\quad\text{for all $j\ne t$.}
\]
\end{thm}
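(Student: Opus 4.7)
The plan is to apply $\phi_*$ to the expression
\[
   \phi^*H = b_0 H_V + \sum_{i=1}^k b_i E_i
\]
from Lemma~\ref{b_j} and combine it with Lemma~\ref{identity}, which gives $\phi_*\phi^*H = H$. This yields
\[
   H = b_0\,\phi_*H_V + \sum_{i=1}^k b_i\,\phi_*E_i
\]
inside $\Pic(\pp^n)=\zz H$, so the whole theorem reduces to identifying which of the pushforwards $\phi_*H_V$ and $\phi_*E_i$ are nonzero and with what multiplicities.

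The key step, which I expect to be the main obstacle, is to show that $\phi_*H_V=0$, i.e.\ that the proper transform $H_V$ is \emph{not} the essential divisor. For this I would use that $\pi_V$ restricts to a birational map $H_V\to H$, so away from a codimension-two subset of $V$ the image $\phi(H_V)$ coincides with $\phi_0\bigl(H\smallsetminus Z(\phi_0)\bigr)$. By Lemma~\ref{inclusion} this image is contained in $Z(\psi_0)$, and since $Z(\psi_0)$ is the indeterminacy locus of a rational self-map of $\pp^n$, hence the common vanishing locus of coprime polynomials, it has codimension at least two in $\pp^n$. Therefore the Zariski closure of $\phi(H_V)$ has codimension at least two, and the pushforward of the codimension-one cycle $H_V$ is zero.

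The remaining pushforwards are easy to classify. By Lemma~\ref{EinH}, $\phi(E_i)\subset H$, and $H$ is irreducible of codimension one, so either $\phi(E_i)=H$, in which case $\phi_*E_i=d_iH$ with $d_i=[K(E_i):K(\phi(E_i))]\ge 1$, or $\phi(E_i)$ is a proper closed subvariety of $H$, of codimension at least two in $\pp^n$, giving $\phi_*E_i=0$. Setting $S=\{i\ge 1:\phi(E_i)=H\}$, the displayed identity above collapses to
\[
   \sum_{i\in S} b_i d_i = 1.
\]
Since Lemma~\ref{b_j} guarantees every $b_i$ is a positive integer and each $d_i$ is a positive integer, this forces $|S|=1$: a single index $t\in S$ with $b_t=d_t=1$. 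Automatically $t\ne 0$ because $t\in S\subset\{1,\dots,k\}$, and we obtain $\phi_*E_t=H$ together with $\phi_*E_j=0$ for every $j\ne t$, which is exactly the assertion. Uniqueness of $t$ is built into the derivation.
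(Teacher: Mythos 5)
Your proposal is correct and follows essentially the same route as the paper: push forward the decomposition of $\phi^*H$, use Lemma~\ref{identity} to get $H=\sum b_i\phi_*E_i$, kill $\phi_*H_V$ via Lemma~\ref{inclusion} and the codimension of $Z(\psi_0)$, and then use the positivity of the $b_i$ from Lemma~\ref{b_j} together with integrality to force a unique index $t$ with $b_t=s_t=1$. The only cosmetic difference is that you invoke Lemma~\ref{EinH} to see each $\phi_*E_i$ is a nonnegative multiple of $H$, where the paper simply cites $\Pic(\pp^n)=\zz$.
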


\begin{proof} It is clear that ${\phi_*}H_V$ and
${\phi_*}E_i$ are nonnegative multiples of $H$, since $\Pic(\pp^n)
=\mathbb{Z}$. So we may write ${\phi}_*E_i=s_i H$ with $s_i
\geq 0$.  For any irreducible divisor~$D$,
the definition of ${\phi}_*D$ is
\[
   [K(D):K({\phi}(D))] \cdot \overline{ \phi(D)}.
\]
From this definition, we get
\begin{equation}
  \label{phiHV0}
  \phi_*H_V=0,
\end{equation}
because $\phi_0 (H\smallsetminus Z(\phi_0)) \subset Z(\psi_0)$ from
Lemma~\ref{inclusion}, and $Z(\psi_0)$ has codimension greater than $1$.
Furthermore, since ${\phi}_* {\phi}^*H = H$ from Lemma~\ref{identity},
we have
\[
  H = {\phi}_* {\phi}^*H
  = {\phi}_*\left(aH_V + \sum_{i=1}^k b_i E_i\right)
  = \sum_{i=1}^k  b_i {\phi}_* E_i = \sum_{i=1}^k b_is_i H.
\]
Lemma~\ref{b_j} says that $b_j>0$ for all $j$, and also $s_i\ge0$ for
all~$i$.  Therefore there is exactly one $t$ satisfying $s_t b_t=1$,
and $s_jb_j=0$ for all $j \neq t$. Then the fact that every $b_j>0$
implies that $s_j=0$ for all $j \neq t$. Finally, since~$s_t$
and~$b_t$ are non-negative integers, the equality~$s_tb_t=1$ implies
that $s_t=b_t=1$.
\end{proof}

\begin{df}
Let $E_t$ be the unique exceptional divisor satisfying
$\phi_*E_t=H$ as described in Theorem~$\ref{essential}$. We call $E_t$
the \emph{essential exceptional divisor} of $\phi$.
\end{df}

%%%%%%%%%%%%%%%%%%%%%%%%%%%%%%%%%%%%%%%%%%%%%%%%%%%%%%%%%%%%%%%%%%%%%%
\section{Proof of Kawaguchi's Conjecture}
\label{proofkawconj}

\begin{thm}
\label{mainthm}
Let $f$ be a regular affine automorphism of $\af^n$. Then there is a
constant $C=C(f)$ such that for all $P\in\af^n$,
\[
   \dfrac{1}{\deg f} h(f(P)) + \dfrac{1}{\deg f^{-1}}h(f^{-1}(P))
   \ge \left( 1+ \dfrac{1}{\deg f \deg f^{-1}} \right) h(P) + C.
\]
\end{thm}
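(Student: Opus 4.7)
The plan is to reduce Kawaguchi's inequality to a divisor-level statement on a common resolution of $V$ and $W$, and then apply the Weil height machine.

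First I would choose a smooth proper variety $X$ with birational morphisms $\alpha\colon X\to V$ and $\beta\colon X\to W$ that are isomorphisms over $\af^n$ and satisfy $\pi_V\circ\alpha=\pi_W\circ\beta=:\pi\colon X\to\pp^n$. For $P\in\af^n$, let $P_X\in X$ be the unique preimage of $P$. Then
$$h(f(P))=h_{\alpha^*\phi^*H}(P_X)+O(1),\quad h(f^{-1}(P))=h_{\beta^*\psi^*H}(P_X)+O(1),\quad h(P)=h_{\pi^*H}(P_X)+O(1).$$
Thus it suffices to compare $\alpha^*\phi^*H$, $\beta^*\psi^*H$ and $\pi^*H$ as divisors on $X$, up to an effective divisor supported on $X\smallsetminus\af^n$.

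Next I would use Theorem~\ref{essential} to write $\phi^*H=dH_V+E_t+\sum_{i\ne t}b_iE_i$ on $V$ (with $b_t=1$), and symmetrically $\psi^*H=eH_W+F_s+\sum_{j\ne s}a_jF_j$ on $W$, with $a_s=1$. The key geometric input is that the birational map $\widetilde{\phi}\colon V\dashrightarrow W$ (lifting $f$) restricts to a birational map $E_t\dashrightarrow H_W$; this follows from $\phi_*E_t=H$ together with $\phi=\pi_W\circ\widetilde{\phi}$ and the fact that $H_W$ is the only irreducible component of $\pi_W^{-1}(H)$ that is not exceptional. Symmetrically $\widetilde{\psi}$ gives a birational correspondence $F_s\dashrightarrow H_V$. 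So via the pull--push lemma (the Lemma~\ref{pull-push} credited to D.~Abramovich), the pullbacks on $X$ obey identifications relating $\alpha^*E_t$ to $\beta^*H_W$ and $\beta^*F_s$ to $\alpha^*H_V$, up to effective contributions supported on divisors at infinity.

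Combining these, I would derive in $\Pic(X)$
$$e\,\alpha^*\phi^*H+d\,\beta^*\psi^*H\;\sim\;(de+1)\pi^*H+G,$$
where $G$ is effective and supported on $X\smallsetminus\af^n$. Heuristically, after using $b_t=a_s=1$ and the identifications above, the left-hand side contributes $(de+d)H_V+(de+e)H_W+(\text{other effective at infinity})$, while $(de+1)\pi^*H$ carries $H_V$ and $H_W$ each with coefficient $1$; the leftover $(d-1)H_V+(e-1)H_W$ is non-negative, and the remaining exceptional divisors combine to an effective $G$ at infinity. Dividing by $de$ and applying the Weil height machine, noting $h_G(P_X)\ge O(1)$ for $P\in\af^n$, yields
$$\frac{1}{\deg f}h(f(P))+\frac{1}{\deg f^{-1}}h(f^{-1}(P))\;\ge\;\Bigl(1+\frac{1}{\deg f\,\deg f^{-1}}\Bigr)h(P)+C.$$

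The main obstacle will be the precise bookkeeping in the divisor calculation. The delicate point is that the "identity" common resolution (where the height computations are immediate) is not the resolution on which the essential divisor $E_t$ literally agrees with $H_W$, and vice versa for $F_s$ and $H_V$: those equalities hold on the resolution of $\widetilde{\phi}$, not on the resolution of the identity. So one must use the pull--push lemma to transfer the identifications between resolutions and verify that the resulting contribution to $G$ is genuinely effective, with the sharp coefficient $(de+1)$ surviving rather than the weaker $de$ of Silverman's bound. Separating the essential exceptional divisor (contribution $1$) from the non-essential ones (contributions $b_i>1$ that would only improve the bound) is exactly what produces the $\tfrac{1}{de}$ gain.
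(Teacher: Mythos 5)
Your overall framework is the same as the paper's: pass to a common resolution lying over the \emph{identity} ($\pi_V\circ\alpha=\pi_W\circ\beta=\pi$), use the Weil height machine to reduce everything to showing that $d'\,\alpha^*\phi^*H+d\,\beta^*\psi^*H-(dd'+1)\pi^*H$ is effective with support away from $\af^n$, and let the essential exceptional divisor (coefficient $1$ in $\phi^*H$) produce the gain over Silverman's bound. But the sketch has a genuine gap exactly where the work is. Two quantitative inputs are needed and are not supplied by your ``identification'' of $\alpha^*E_t$ with $\beta^*H_W$. First, you must pin down the coefficient of the essential divisor $E_t$ in $\pi_V^*H$: it equals $d'=\deg f^{-1}$ (the paper's Lemma~\ref{pullback}, proved from $\pi_V=\psi_0\circ\phi$, $\phi_*E_t=H$ with multiplicity one, and $\ord_H(u\circ\psi_0)=d'$). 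After cancellation the $E_t$-coefficient of the difference is $d'-a_t$, so the birational correspondence $E_t\dashrightarrow H_W$ alone (which carries no degree information) does not suffice. Second, for the non-essential exceptional divisors one needs $d'b_i\ge a_i$ (i.e.\ $d'I_V\ge M_V$, and symmetrically $dJ_W\ge N_W$); this is where Lemma~\ref{pull-push} is actually used in the paper, applied to the birational morphism $\phi:V\to\pp^n$ and the nef effective divisor $\pi_V^*H$, together with the lemma characterizing effectivity by coefficients. Your step ``the remaining exceptional divisors combine to an effective $G$'' is precisely this assertion, not a consequence of anything you have established; and the remark that non-essential coefficients ``would only improve the bound'' is wrong, since those same divisors occur in $\pi^*H$ with coefficients $a_i$ and are subtracted with the large factor $dd'+1$.

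There are two further problems. Your heuristic bookkeeping is internally inconsistent: on a resolution over the identity the proper transforms of $H_V$ and $H_W$ are the \emph{same} divisor, so $\pi^*H$ cannot be counted as carrying ``$H_V$ and $H_W$ each with coefficient $1$''; moreover, under your own identification, $\pi^*H=\alpha^*(H_V+d'E_t+M_V)$ carries ``$H_W$'' with coefficient $d'$, not $1$. In fact on the paper's $U$ one has $\beta^*H_W=H_U+d'E+M$ while $\alpha^*E_t=E$, so the relation between the two classes is only a one-sided inequality, whereas the cancellation of the $E$- and $F$-coefficients in the final computation must be exact. Finally, your sketch never uses regularity of $f$, yet the statement is false for non-regular automorphisms (e.g.\ $f(x,y)=(x,y+x^2)$ at points $(0,b)$), so no correct argument can avoid it; in the paper it enters when the two resolutions are combined, since $Z(\phi_0)\cap Z(\psi_0)=\emptyset$ makes the blowup centers disjoint and guarantees the clean formulas $\phi_U^*H=d(H_U+dF+N)+E+I$ and $\psi_U^*H=d'(H_U+d'E+M)+F+J$ from which effectivity of the difference follows.
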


We recall an important definition.

\begin{df}
Let $D \in \Pic(X)$. We say that $D$ is \emph{numerically effective} if
\[
  D^r \cdot Y \geq 0
\]
for all $r\ge1$ and all integral subschemes $Y\subset$ of dimension~$r$.
\end{df}

\begin{prop}
Let $\zeta : X \rightarrow X'$ be a morphism of projective varieties,
and let $D \in \Pic(X')$ be numerically effective. Then $\zeta^*D$ is
also numerically effective.
\end{prop}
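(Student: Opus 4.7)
The plan is to reduce the statement on $X$ to the numerical effectivity of $D$ on $X'$ via the projection formula. Concretely, for any integral subscheme $Y\subset X$ of dimension $r\ge1$, I would like to show
\[
  (\zeta^*D)^r\cdot Y \;=\; D^r\cdot \zeta_*Y,
\]
and then argue the right-hand side is non-negative directly from the hypothesis on $D$.

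The first step is to note that $\zeta^*$ is a ring homomorphism on the level of Chow/intersection classes, so $(\zeta^*D)^r = \zeta^*(D^r)$. The second step is the projection formula in the form $\zeta^*(\alpha)\cdot Y = \alpha\cdot \zeta_*Y$ whenever the codimension of $\alpha$ on $X'$ equals the dimension of $Y$ on $X$; taking $\alpha = D^r$ (a class of codimension $r$ on $X'$) and $Y$ of dimension $r$ on $X$, this applies. The third step is to analyze $\zeta_*Y$ by the two standard cases: if $\dim\zeta(Y)<r$, then by definition $\zeta_*Y=0$, so $D^r\cdot\zeta_*Y=0$; if $\dim\zeta(Y)=r$, then
\[
  \zeta_*Y \;=\; [K(Y):K(\zeta(Y))]\cdot \zeta(Y),
\]
with positive multiplicity and $\zeta(Y)$ an integral subscheme of $X'$ of dimension $r$. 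The numerical effectivity hypothesis applied to $\zeta(Y)$ gives $D^r\cdot\zeta(Y)\ge 0$, so $D^r\cdot\zeta_*Y\ge 0$ in this case as well.

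There is no substantive obstacle here; the entire content is the projection formula plus the definition of pushforward of cycles. The only thing to be a little careful about is that the projection formula is being used in the numerical/intersection-theoretic form (not merely as an equality of divisor classes), which is the standard version for proper morphisms of projective varieties and applies without restriction in our setting.
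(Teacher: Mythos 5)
Your proof is correct and follows essentially the same route as the paper: repeated use of the projection formula to reduce $(\zeta^*D)^r\cdot Y$ to $D^r\cdot\zeta_*Y$, then invoking the numerical effectivity of $D$ on $X'$. If anything, your explicit case analysis of $\zeta_*Y$ (zero when $Y$ is contracted, a positive multiple of the integral image otherwise) is slightly more careful than the paper's assertion that $\dim(\zeta_*Y')=r$, which silently ignores the contracted case.
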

\begin{proof}
This is a standard result, see for example
\cite[Example~1.4.4(i)]{Laz}, but for the readers conveience, we
provide the short proof.  From the projection property of
intersection,
\begin{align*}
  (\zeta^*D)^r \cdot Y'
  =  \zeta^*D \cdot \bigl( (\zeta^*D)^{r-1} \cdot Y'\bigr)
  &= \zeta^* \left[ D \cdot \zeta_* \left\{ (\zeta^*D)^{r-1} \cdot
   Y'\right\} \right]\\
  &= \zeta^* \left[ D \cdot D \cdot \zeta_* \left\{ (\zeta^*D)^{r-2}
   \cdot Y'\right\} \right]
  = \cdots = D^r \cdot \zeta_* Y'.
\end{align*}
Since $\dim(\zeta_* Y') = r$ and $D$ is numerically effective, this
last intersection is nonnegative.
\end{proof}

An important ingredient in the proof of Theorem~\ref{mainthm} is a
result that describes the pullback of the pushforward of a numerically
effective divisor.

\begin{lem}
\label{pull-push}
Let $\rho: \widetilde{X} \rightarrow X$ be a birational morphism and
let $Z$ be an divisor on $\widetilde{X}$ that is both effective and
numerically effective. Then $\rho^*\rho_*Z \geq Z$.
\end{lem}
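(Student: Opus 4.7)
The plan is to reduce the statement to the negativity-of-contraction principle for birational morphisms. Set $N := \rho^* \rho_* Z - Z \in \Div(\widetilde{X})$; the goal is to show $N \ge 0$. I will establish two properties of $N$: first, that $\rho_* N = 0$; second, that $-N$ is $\rho$-nef in the sense that $N \cdot C \le 0$ for every irreducible curve $C\subset\widetilde{X}$ that is contracted to a point by $\rho$. Once these are in hand, $N$ is forced to be an effective divisor supported on the exceptional locus of $\rho$.

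The first property is immediate from the identity $\rho_* \rho^* D = D$, valid for any Cartier divisor $D$ on $X$ because $\rho$ is birational: the non-exceptional components of $\rho^* D$ are the strict transforms of the components of $D$ (with the same multiplicities), while any exceptional components of $\rho^* D$ are collapsed by $\rho_*$. Applied with $D = \rho_* Z$, this yields $\rho_* N = \rho_* Z - \rho_* Z = 0$. For the second property, let $C \subset \widetilde{X}$ be an irreducible curve with $\rho(C)$ a point, so $\rho_* C = 0$ as a $1$-cycle on $X$. The projection formula then gives
\[
    \rho^* \rho_* Z \cdot C = \rho_* Z \cdot \rho_* C = 0,
\]
and consequently $N \cdot C = -Z \cdot C \le 0$, using that $Z$ is numerically effective.

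To conclude, one invokes the negativity lemma for proper birational morphisms (see e.g.\ Koll\'ar--Mori, \emph{Birational Geometry of Algebraic Varieties}, Lemma~3.39): if $N$ is $\mathbb{Q}$-Cartier on $\widetilde{X}$, the pushforward $\rho_* N$ is effective, and $-N$ is $\rho$-nef, then $N$ itself is effective. Since $\rho_* N = 0$ (hence trivially effective) and $-N$ is $\rho$-nef by the previous step, this gives $N \ge 0$, which is the desired inequality.

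The main obstacle is precisely this final appeal. The negativity lemma encodes the fact that the intersection form on the exceptional locus of a birational contraction is negative definite, and in the context of this paper (where $\widetilde{X}$ is a composition of monoidal transforms of a smooth variety) it is natural but not entirely formal. A self-contained derivation would write $N = N^+ - N^-$ as a difference of effective divisors with disjoint supports, choose a general complete-intersection curve $C$ inside the support of $N^-$ that meets $N^+$ trivially and is contracted by $\rho$, and derive a contradiction from $N \cdot C \le 0$ (which would force $N^- \cdot C \ge 0$, contradicting the negative self-intersection of $N^-$ along its own support).
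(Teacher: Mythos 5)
Your proof is correct and is essentially the paper's argument in different clothing: the paper writes $Z = \sum_i a_i\rho^*D_i + \sum_j f_j F_j$ in a basis of $\Pic(\widetilde{X})$ built from pullbacks and exceptional divisors and cites Koll\'ar's negativity lemma ([KD, Lemma 2.19]) to conclude $f_j\le 0$, which is exactly your statement that $N=\rho^*\rho_*Z-Z=-\sum_j f_jF_j$ is effective. Your formulation, applying the negativity lemma directly to $N$ after checking $\rho_*N=0$ and that $-N$ is $\rho$-nef via the projection formula, is if anything a cleaner packaging of the same key input, since it makes explicit the hypotheses that the paper's citation leaves implicit.
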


\begin{proof}
Let $Z$ be an effective and numerically effective divisor, and let
\[
  \Pic (V) = \langle D_1, \cdots, D_r \rangle
\]
be a generating set for $\Pic(V)$, where the $D_i$ are effective
divisor of $V$. Then we can find a generating set
\[
  \Pic (W) = \langle D_1^\#, \cdots, D_r^\#, F_1, \cdots, F_s \rangle,
\]
where $D_i^\#$ is the proper transformation of $D_i$, and where the
$F_i$ are exceptional divisors of the birational morphism~$\rho$.
We write the pull-back of~$D_i$ as
\[
  \rho^*D_i = D_i^\# + \sum_j m_{ij}F_j.
\]
This allows us to write the given divisor~$Z$ as
\begin{align*}
  Z &= \sum_i a_i D_i^\# + \sum_j b_j F_j \\
  &= \sum_i a_i\left(\rho^*D_i + \sum_j m_{ij}F_j\right) + \sum_j b_j F_j \\
  &= \sum_i a_i \rho^*D_i +  \sum_j f_j F_j,
\end{align*}
where each~$f_j$ is some expression involving the~$b_j$ and~$m_{ij}$.
Further, and this is a key point, the fact that~$Z$ is numerically
effective implies that $f_j \leq 0$ for all $j$. This follows
from~\cite[Lemma 2.19]{KD}.
\par
We now compute
\begin{align*}
  \rho^*\rho_*Z &= \sum_i a_i \rho^*\rho_*\rho^* D_i + \sum_j f_j
  \rho^*\rho_* F_j \\
  &= \sum_i a_i \rho^*D_i
  \qquad\text{since $\rho_*\rho^*D_i=D_i$ and $\rho_*F_j=0$.}
\end{align*}
Hence
\[
  \rho^*\rho_*Z - Z = -\sum_j f_j F_j = \sum_j (-f_j) F_j \ge 0
\]
is effective.
\end{proof}

We recall that we have the following diagram of maps,
\[
  \xymatrix{
  V \ar[d]_{\pi_{V}} \ar[rd]^{{\phi}} \\
  \pp^n \ar@{-->}[r]_{\phi_0}  & \pp^n }
%%  \pp_1^n \ar@{-->}[r]_{\phi_0}  & \pp_2^n }
\]
For notational convenience, we let
\[
  d=\deg(f)\qquad\text{and}\qquad d'=\deg(f^{-1}).
\]

\begin{lem}
\label{pullback}
Let~$E_t$ be the essential divisor of~$\phi$.
Then the pull-backs of~$H$ by~$\pi_V$ and~$\phi$ have the form
\[
  \pi_V^*H = H_V + d' E_t + \sum_{i\ne t} a_iE_i, \qquad
  \phi^*H = d H_V + E_t +  \sum_{i\ne t} b_iE_i,
\]
where the coefficients of the non-essential exceptional divisors satisfy
\[
  d'b_i \ge a_i \ge 0 \quad\text{for all $i\ne t$.}
\]
\end{lem}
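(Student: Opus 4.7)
My plan has three main steps.

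\emph{Basic form.} I would first write $\pi_V^*H=H_V+\sum_i a_iE_i$ with $a_i\ge 0$: the coefficient of $H_V$ is~$1$ because $H_V$ is the proper transform of $H$ under the birational morphism $\pi_V$, and the other coefficients are non-negative because $\pi_V^*H$ is an effective pullback supported in $H_V\cup\bigcup_i E_i$. For $\phi^*H$, Lemma~\ref{b_j} and Theorem~\ref{essential} give $\phi^*H=b_0 H_V+E_t+\sum_{i\ne t}b_iE_i$ with $b_i>0$. To identify $b_0=d$, I would restrict to the open set $V\setminus\bigcup E_i$, which $\pi_V$ identifies with $\pp^n\setminus Z(\phi_0)$; on that open set $\phi$ coincides with $\phi_0=[X_0^d,F_1,\ldots,F_n]$, and $\phi^*H$ is cut out by $X_0^d$, contributing $d H_V$ there.

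\emph{The coefficient $a_t=d'$.} I expect this to be the main obstacle. The idea is to compute $\phi_*(\pi_V^*H)$ in two ways. First, direct pushforward using $\phi_*H_V=0$, $\phi_*E_t=H$, and $\phi_*E_i=0$ for $i\ne t$ from Theorem~\ref{essential} gives $\phi_*(\pi_V^*H)=a_tH$. Second, I would use the identity $\psi\circ\widetilde{\phi}=\pi_V$ of rational maps (which holds because it restricts to $f^{-1}\circ f=id$ on $\af^n$), together with $\phi=\pi_W\circ\widetilde{\phi}$. Take a birational morphism $\alpha:\tilde X\to V$ resolving the indeterminacy of $\widetilde{\phi}$ so that $\beta:=\widetilde{\phi}\circ\alpha:\tilde X\to W$ is a morphism; the identities then lift to $\pi_V\circ\alpha=\psi\circ\beta$ and $\phi\circ\alpha=\pi_W\circ\beta$. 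From the first, $\alpha^*\pi_V^*H=\beta^*\psi^*H$; pushing forward by $\alpha_*$ (using $\alpha_*\alpha^*=id$ for birational $\alpha$) yields $\pi_V^*H=\alpha_*\beta^*\psi^*H$. Applying $\phi_*$ and functoriality gives
\[
  \phi_*(\pi_V^*H)=(\pi_W\circ\beta)_*\beta^*\psi^*H=\pi_{W*}\beta_*\beta^*\psi^*H=\pi_{W*}\psi^*H,
\]
where the last step uses $\beta_*\beta^*=id$. By the same reasoning applied to $\psi$ on $W$ (the analogs of Lemma~\ref{b_j}, Theorem~\ref{essential}, and the $b_0=d$ calculation), $\psi^*H=d'H_W+F_{t'}+\sum_{j\ne t'}b'_jF_j$. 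Since $\pi_{W*}H_W=H$ while $\pi_{W*}F_j=0$ for each exceptional $F_j$, this gives $\pi_{W*}\psi^*H=d'H$. Equating the two expressions, $a_tH=d'H$, hence $a_t=d'$.

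\emph{The inequality $d'b_i\ge a_i$.} With $a_t=d'$ established, I would apply Lemma~\ref{pull-push} to the birational morphism $\phi:V\to\pp^n$ and the divisor $Z=\pi_V^*H$, which is effective and numerically effective as the pullback of the ample divisor $H$. Step~2 already computed $\phi_*Z=d'H$, and so
\[
  d'\phi^*H=\phi^*\phi_*(\pi_V^*H)\ge\pi_V^*H.
\]
Substituting the formulas from Step~1 and reading off the coefficients of $E_i$ for $i\ne t$ yields $d'b_i\ge a_i$ (the $H_V$ and $E_t$ comparisons $dd'\ge 1$ and $d'=d'$ are automatic).

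The main obstacle is Step~2: setting up the common resolution of $\widetilde{\phi}$ and carrying out the two-sided computation of $\phi_*(\pi_V^*H)$. The symmetric pullback formula for $\psi$ on $W$ is needed, but follows from the same reasoning applied on the inverse side.
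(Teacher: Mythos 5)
Your proposal is correct, and its overall skeleton is the paper's: identify the coefficients $a_0=1$, $b_t=1$, $b_0=d$, $a_t=d'$, then apply Lemma~\ref{pull-push} to the effective nef divisor $\pi_V^*H$ and the morphism $\phi$, and compare with $\phi_*\pi_V^*H=d'H$. The genuine difference is in how you obtain $a_t=d'$. The paper does a local valuation computation: with $u$ a uniformizer of $H$ it uses $\pi_V=\psi_0\circ\phi$, the fact that $\ord_{E_t}(u\circ\phi)=b_t=1$, and $\psi_0=[x_0^{d'},g_1,\dots,g_n]$ to get $\ord_{E_t}(u\circ\pi_V)=\ord_H(u\circ\psi_0)=d'$, and then pushes forward by $\phi$. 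You instead argue globally on a common resolution: you lift the identity $\psi\circ\widetilde{\phi}=\pi_V$ of rational maps (justified, as in the paper's Lemma~\ref{EinH}, by agreement on the dense affine part) to a resolution $\alpha$ of $\widetilde{\phi}$, and use $\alpha_*\alpha^*=\mathrm{id}$ and $\beta_*\beta^*=\mathrm{id}$ for birational morphisms to conclude $\phi_*\pi_V^*H=\pi_{W*}\psi^*H=d'H$. This trades the order-of-vanishing bookkeeping along $E_t$ for functoriality of proper push-forward; both routes ultimately need the $W$-side input that the $H_W$-coefficient of $\psi^*H$ is $d'$, since $d'$ can only enter through $\psi_0$. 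One further small divergence at the end: the paper does not simply read off coefficients. Because its proof of Lemma~\ref{pull-push} works in $\Pic(V)$, it only concludes that $d'\phi^*H-\pi_V^*H$ is linearly equivalent to an effective divisor, and it inserts an extra lemma (pushing forward by $\phi$ and using the linear independence of $H_V,E_1,\dots,E_k$ in $\Pic(V)$, Lemma~\ref{indepofexceptdivs}) to convert this into coefficientwise inequalities. If Lemma~\ref{pull-push} is taken at face value as a literal inequality of divisors, your direct comparison is legitimate, since both $d'\phi^*H$ and $\pi_V^*H$ are supported on the distinct prime divisors $H_V,E_1,\dots,E_k$; just be aware that the paper's own derivation of that lemma forces the extra step. (Incidentally, the paper's displayed inequality swaps $a_i$ and $b_i$ relative to the statement of Lemma~\ref{pullback}; your version agrees with the statement, which is the correct orientation.)
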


\begin{proof}[Proof of Lemma $\ref{pullback}$]
Let
\[
  \pi_V^*H = a_0H_V + \sum_{i=1}^k a_i E_i \qquad\text{and}\qquad
   \phi^*H = b_0H_V +  \sum_{i=1}^k b_i E_i,
\]
Clearly ${\pi_V}_* \pi_V^*H = H$ by Lemma~\ref{identity}, and
${\pi_V}_*E_i=0$ because ${\pi_V}(E_i)\subset Z(\phi_0)$.  So,
\[
  H = {\pi_V}_*\pi_V^*H = {\pi_V}_*\left(a_0H_V+\sum_{i=1}^k a_iE_i\right) =
  {\pi_V}_*(a_0H_V)=a_0 H,
\]
and hence $a_0=1$. And we have $b_t=1$ by the definition of the
essential exceptional divisor.

Next we compute $b_0$ and $a_t$.  Letting $u$ be a uniformizer at $H$,
we have by definition
\[
  \phi^*H = \ord_{H_V}(u\circ \phi) \cdot H_V + \sum_{i=1}^k
  \ord_{E_i}(u\circ \phi) \cdot E_i.
\]
Taking the push-forward of $\phi^*H$ by $\pi_V$, we get
\[
  {\pi_V}_*\phi^*H = \ord_{H_V}(u \circ \phi) \cdot {\pi_V}_*H_V +
  \sum_{i=1}^k \ord_{E_i}(u\circ \phi) \cdot {\pi_V}_* E_i
  =\ord_{H_V}(u\circ \phi) \cdot H_V,
\]
since we showed
\[
  {\pi_V}_*H_V=H \qquad \text{and} \qquad {\pi_V}_*E_i=0.
\]
Furthermore, because $\phi = \phi_0\circ \pi_V $ and $\phi_0 =[x_0^d ,
f_1, \cdots, f_n]$, we have
\[
  \ord_{H_V}(u \circ \phi) = \ord_{H}(u \circ \phi \circ \pi_V^{-1}) =
  \ord_{H}(u \circ \phi_0)= d.
\]
On the other hand,
\[
  {\pi_V}_*\phi^*H = {\pi_V}_*\left(b_0H_V + \sum_{i=1}^k b_iE_i\right)
   = b_0  {\pi_V}_*(H_V) + \sum_{i=1}^k b_i{\pi_V}_*(E_i) = b_0H,
\]
and hence $b_0=d$.

Similarly,
\[
  \pi_V^*H = \ord_{H_V}(u\circ \pi_V) \cdot H_V + \sum_{i=1}^k
  \ord_{E_i}(u\circ \pi_V) \cdot E_i,
\]
and
\[
  {\phi}_*\pi_V^*H = \ord_{H_V}(u \circ \pi_V) \cdot {\phi}_*H_V +
  \sum_{i=1}^k \ord_{E_i}(u\circ \pi_V)\cdot {\phi}_* E_i
  =\ord_{E_t} (u\circ \phi)\cdot  H,
\]
since
\[
  {\phi}_*H_V=0, \qquad {\phi}_*E_t=H \qquad\text{and} \qquad \phi_*E_j =0
  \qquad\text{for all}\quad j\neq t.
\]
Furthermore, because
\[
  \pi_V = \psi_0 \circ \phi\qquad\ord_{E_t}(u\circ
  \phi)=b_V=1,\qquad\text{and}\qquad \phi_0 =[x_0^{d'} , g_1, \cdots, g_n],
\]
we have
\[
  \ord_{E_t}(u \circ \pi_V) = \ord_{E_t}(u \circ \psi_0 \circ \phi) =
  \ord_{H}(u \circ \psi_0)= d'.
\]
On the other hand,
\[
  {\phi}_*\pi_V^*H = {\phi}_*\left(a_0H_V + \sum_{i=1}^k a_iE_i\right) = a_0
  {\phi}_*H_V + \sum_{i=1}^k a_i {\phi}_*E_i = a_tH,
\]
and hence $a_t=d'$.
\par
It is clear that all of the $a_i$ and $b_i$ are non-negative, since~$H$
is effective and $H_V$ and the~$E_i$ are the divisors whose support
is contained in~$\pi_V^{-1}(H)$ and~$\phi^{-1}(H)$. It remains to prove
the inequality~$d'b_i\ge a_i$. We apply Lemma~\ref{pull-push}
to the divisor~$\pi_V^*H$, which is both effective and numerically
effective, and to the map~$\phi_V$. Lemma~\ref{pull-push} tells us that
\begin{align*}
   \phi_V^*{\phi_V}_*\bigl(\pi_V^*(H)\bigr) \geq\pi_V^*(H).
\end{align*}
We also have
\begin{align*}
  {\phi_V}_*\pi_V^*(H)
  &= {\phi_V}_*\left(H_V+d'E_t+\sum_{i\ne t}a_iE_i\right)
    &&\text{from above,}\\
  &= d'H
    &&\text{from Theorem \ref{essential}.}
\end{align*}
(Note that $\phi_*E_i=0$ for $i\ne t$ be the definition of the
essential divisor, and we have already seen in~\eqref{phiHV0} that
${\phi_V}_*H_V=0$.)
Hence
\[
  \phi_V^*(d'H) = \phi_V^*({\phi_V}_*\pi_V^*(H)) \geq\pi_V^*(H).
\]
Using the expressions for $\phi_V^*(H)$ and $\pi_V^*(H)$ from
above, this inequality gives
\[
  d'\left(d H_V + E_t + \sum_{i\ne t} a_iE_i\right)
  \ge H_V + d' E_t +  \sum_{i\ne t} b_iE_i.
\]
A little bit of algebra yields
\[
  (dd'-1)H_V + \sum_{i\ne t} (d'a_i-b_i)E_i \ge 0.
\]
\par
Then the following lemma shows that $d'a_i\ge b_i$ for all $i\ne t$.
\end{proof}

\begin{lem}
The divisor
\[
   c_0H_V + \sum_{i\ne t} c_iE_i \in \Div(V)
\]
is linearly equivalent to an effective divisor if and only if
$c_i\ge0$ for all~$i\neq t$.
\end{lem}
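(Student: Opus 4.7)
The plan is to prove the two directions separately. The ``if'' direction is immediate: if $c_0\ge 0$ and $c_i\ge 0$ for every $i\ne t$, the divisor $c_0H_V+\sum_{i\ne t}c_iE_i$ is a non-negative integer combination of prime divisors on~$V$, hence already effective.

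For the converse, suppose $D:=c_0H_V+\sum_{i\ne t}c_iE_i$ is linearly equivalent to some effective divisor~$D'$. The key observation is that $\phi_*D=0$ as a cycle, since $\phi_*H_V=0$ by~\eqref{phiHV0} and $\phi_*E_i=0$ for every $i\ne t$ by Theorem~\ref{essential}. Since pushforward preserves linear equivalence, the class of $\phi_*D'$ vanishes in $\Pic(\pp^n)$, so $\phi_*D'$ is a principal divisor. But $\phi_*D'$ is also effective (because cycle-theoretic pushforward preserves effectiveness), and the only effective principal divisor on~$\pp^n$ is zero; therefore $\phi_*D'=0$ as a cycle.

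Now write $D'=\sum_j m_jW_j$ as a positive combination of distinct prime divisors. Because $\phi$ is a birational morphism, no two distinct prime divisors of~$V$ can dominate the same prime divisor of~$\pp^n$ (the associated discrete valuations on the common function field would coincide), so the nonzero terms in $\phi_*D'=\sum_j m_j\phi_*W_j$ involve distinct prime divisors and cannot cancel. Consequently each $\phi_*W_j=0$, so every $W_j$ is $\phi$-exceptional. Since $f$ is a regular affine automorphism, $\phi$ restricts to an isomorphism on $V\smallsetminus(H_V\cup\bigcup_iE_i)$, so every $\phi$-exceptional prime divisor lies in $\{H_V,E_1,\dots,E_k\}$. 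Among these, $\phi_*E_t=H\ne 0$ whereas $\phi_*H_V=0$ and $\phi_*E_i=0$ for $i\ne t$; so the $\phi$-exceptional primes are precisely $H_V$ and the $E_i$ with $i\ne t$.

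We conclude $D'=m_0H_V+\sum_{i\ne t}m_iE_i$ with $m_0,m_i\ge 0$. By Lemma~\ref{indepofexceptdivs} the classes $H_V,E_1,\dots,E_k$ form a free $\zz$-basis of $\Pic(V)$, so $[D]=[D']$ in $\Pic(V)$ forces $c_0=m_0\ge 0$ and $c_i=m_i\ge 0$ for all $i\ne t$, as required. I expect the main obstacle to be the support statement for~$D'$: upgrading the Picard-level identity $\phi_*[D']=0$ to the cycle-level identity $\phi_*D'=0$ (using that $\pp^n$ admits no nonzero effective principal divisors), and then converting $\phi_*D'=0$ into a genuine support statement for~$D'$, which relies on birationality to rule out cancellation between the contributions of distinct components.
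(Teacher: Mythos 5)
Your argument is correct and takes essentially the same route as the paper: push the linear equivalence forward by $\phi$, use $\phi_*H_V=0$, $\phi_*E_i=0$ for $i\ne t$, $\phi_*E_t=H$, and the fact that any prime divisor other than $H_V$ and the $E_i$ meets the copy of $\af^n$ and hence has nonzero pushforward, then conclude by positivity in $\Pic(\pp^n)=\zz$ and the linear independence of $H_V,E_1,\dots,E_k$ in $\Pic(V)$. The only difference is organizational: you route through the cycle-level identity $\phi_*D'=0$ and a valuation-theoretic no-cancellation step, which is superfluous since each term $m_j\phi_*W_j$ is already effective, so the sum vanishes only if every term does.
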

\begin{proof}
One direction is obvious. For the other, suppose that
\[
  c_0H_V + \sum_{i\ne t} c_iE_i
  \sim n_0H_V + \sum_i n_iE_i + \sum_j m_jD_j,
\]
where $n_i\ge0$ and $m_j\ge0$, and where the~$D_j$ are irreducible
divisors distinct from~$H_V$ and the~$E_i$. Note that the~$D_j$ have
the property that $\pi_*D_j$ has nontrivial intersection with~$\af^n$,
since~$H_V$ is the proper transform of $H=\pp^n\setminus\af^n$ and
the~$E_i$ are the exceptional divisors of the blowup
$\pi:V\to\pp^n$. Then the fact that~$\phi_0$ is an automorphsim
of~$\af^n$ implies that $\phi_*D_j\ne0$.  Hence there are positive
integers $k_j$ such that $\phi_*D_j\sim k_jH$.
\par
We know from before that $\phi_*H_V=0$ and $\phi_*E_i=0$ for $i\ne t$,
and also $\phi_*E_t=H$. Therefore
\[
  0 = \phi_*\biggl(c_0H_V + \sum_{i\ne t} c_iE_i\biggr)
  = n_t\phi_*E_t +  \sum_j m_j\phi_*D_j
  \sim n_t H +  \sum_j m_jk_j H.
\]
Here $n_t\ge0$, $m_j\ge0$, and $k_j>0$, and $\Pic(\pp^n)=\zz$,
so we conclude that $n_t=0$ and $m_j=0$ for all~$j$.
\par
This proves that
\[
  c_0H_V + \sum_{i\ne t} c_iE_i
  \sim n_0H_V + \sum_{i\ne t} n_iE_i.
\]
Lemma~\ref{indepofexceptdivs} says that $H_V$ and the $E_i$ are
linearly independent in~$\Pic(V)$, so $c_i=n_i\ge0$ for all $i$.
\end{proof}

\begin{proof}[Proof of Theorem~$\ref{mainthm}$]
To ease notation, we let~$E_V=E_t$ be the essential divisor,
and we define
\[
  M_V = \sum_{i\neq t} a_iE_i\qquad\text{and} \qquad
  I_V = \sum_{i\neq t} b_i E_i.
\]
Then Lemma~\ref{pull-push} can be rewritten as
\[
  \pi_V^*H = H_V + d' E_V + M_V,\qquad
  \phi^*H = d H_V + E_V + I_V,
\]
where $M_V,I_V\ge0$ and $d'I_V\ge M_V$. Further, the supports of~$I_V$
and~$M_V$ do not contain the support of~$H_V$ or~$E_V$.

Now do the same with a blowup that resolves the indeterminacy of $\psi_0$,
\[
  \xymatrix{
	 & W \ar[d]^{\pi_{W}} \ar[dl]_{{\psi}} \\
  \pp^n   &  \pp^n \ar@{-->}[l]^{ \psi_0} }
\]
Since $\psi_0$ is the extension of the affine automorphism
$f^{-1}$ to~$\pp^n$, we have using similar notation,
\[
  \pi_W^*H = H_W + d'F_W + N_W,\qquad
  \psi^*H = dH_W + E_W + J_W,
\]
with $J_W,N_W\ge0$ and $dJ_W \geq N_W$.  Here $F_W$ is the essential
divisor of $\psi$, and $N_W$ and $J_W$ are $\af^n$-effective divisors
whose supports do not contain the support of $H_W$ or $F_W$.

Now, consider a blowup $U$ of $\pp^n$ that resolves both $\phi_0$ and
$\psi_0$. The assumption that~$f$ is regular implies that the centers
of the blowups~$V$ and~$W$ are disjoint, so~$U$ is a blowup of $\pp^n$
whose center is the scheme-theoretic sum of the centers of the blowups
$V$ and $W$. We have the following diagram:
\[
\xymatrix{
   & & U \ar[rd] \ar[ld] \ar[dd]^{\pi_U} \ar@/_2pc/[lldd]_{\psi_U} \ar@/^2pc/[rrdd]^{\phi_U}\\
   &W \ar[rd]^<<{\pi_W} \ar[ld]_<<<{\psi}& &
   V \ar[rd]^<<{\phi} \ar[ld]_<<<{\pi_V} \\
\pp^n & & \pp^n \ar@{-->}[ll]^{\psi_0} \ar@{-->}[rr]_{\phi_0}& & \pp^n
}
\]
Let $H_U$ be the proper transformation of $H$ by $\pi_U : U
\rightarrow \pp^n$, and let $E,M,I, F,N,J$ be the proper
transformations of $E_V,M_V,I_V, F_W,N_W,J_W$, respectively. Then we
have
\begin{align*}
  \pi_U^*H  &=  H_U + d'E + dF + M + N,\\
  \phi_U^*H &=  d(H_U + dF + N)+ E + I,\\
  \psi_U^*H &=  d'(H_U  + d'E + M ) +F + J.
\end{align*}

Finally, we compute the divisor
\begin{align*}
  D &= \dfrac{1}{d} \phi_U^*H + \dfrac{1}{d'} \psi_U^*H - \left( 1 +
  \dfrac{1}{dd'} \right) \pi_U^*H \\
  &= (H_U + dF + N) + \dfrac{1}{d} (E+I) + (H_U + d'E + M ) +
  \dfrac{1}{d'}(F+J)\\
  &\qquad {}- \left( 1 + \dfrac{1}{dd'}\right) (H_U + d'E + dF + M + N) \\
  &= \left(1-\dfrac{1}{dd'} \right)H_U
      + \dfrac{1}{dd'}(dJ - N + d'I - M).
\end{align*}
The fact that $d'I>M$ and $dJ >N$ implies that $D$ is effective,
which completes the proof of Theorem~\ref{mainthm}.
\end{proof}

%%%%%%%%%%%%%%%%%%%%%%%%%%%%%%%%%%%%%%%%%%%%%%%%%%%%%%%%%%%%%%%%%%%%%%
\section{Canonical Height Functions for Regular Affine Automorphism}

In Sections~4 and 5 of \cite{K}, Kawaguchi constructed canonical
heights for regular affine automorphisms under the assumption that his
height lower bound conjecture was true.  Since we have proved his
conjecture, his construction of canonical heights, as described in the
following theorem, is now valid in all dimensions.

\begin{thm}
Let $f :\af^n \rightarrow \af^n$ be an affine
automorphism of degree $d$ and $f^{-1}$ be its inverse map
of degree $d'$. Define
\[
  \widehat{h}_+(P) = \limsup \dfrac{1}{d^l}h(f^n(P)), \quad
  \widehat{h}_-(P) = \limsup \dfrac{1}{d'^l}(h(f^{-1})^n(P))
\]
and
\[
  \widehat{h}(P) = \widehat{h}_+(P) - \widehat{h}_-(P)
\]
Then,
\[
  \dfrac{1}{d}\widehat{h}(f(P))+ \dfrac{1}{d'}\widehat{h}(f^{-1}(P)) =
  \left( 1+ \dfrac{1}{dd'}\right) \widehat{h}(P)
\]
and $\widehat{h}(P)=0$ if and only if $P$ is $f$-periodic.
\end{thm}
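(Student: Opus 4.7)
The plan is to run the standard canonical-height machine, with Theorem~\ref{mainthm} replacing the transformation formula $h\circ f=d\,h+O(1)$ that is available for polarized endomorphisms. The argument splits into four steps, only the last of which uses Theorem~\ref{mainthm} in any essential way.

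First, the $\limsup$s defining $\widehat{h}_\pm$ are in fact limits: the trivial upper bound $h(f(P))\le d\,h(P)+O(1)$, coming from $\phi_0$ being a rational map of degree~$d$ on~$\pp^n$, forces $a_l:=h(f^l(P))/d^l$ to satisfy $a_{l+1}\le a_l+O(d^{-(l+1)})$, so the shifted sequence $a_l+Cd^{-l}/(d-1)$ is monotone decreasing and nonnegative and hence convergent. Second, re-indexing the defining limit yields
\[
  \widehat{h}_+(f^{\pm 1}(P))=d^{\pm 1}\widehat{h}_+(P), \qquad \widehat{h}_-(f^{\pm 1}(P))=(d')^{\mp 1}\widehat{h}_-(P),
\]
and substituting these into $\widehat{h}=\widehat{h}_+-\widehat{h}_-$ reduces the functional equation to a one-line algebraic identity (everything becomes a multiple of $\widehat{h}(P)$). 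Third, if $f^N(P)=P$ then $\{h(f^l(P))\}_l$ takes only $N$ values and is bounded, so $\widehat{h}_\pm(P)=0$ and in particular $\widehat{h}(P)=0$.

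The substantive step is the converse. The key additional ingredient to extract from Theorem~\ref{mainthm} is a Northcott-type lower bound
\[
  \widehat{h}_+(P)+\widehat{h}_-(P)\ge h(P)-C.
\]
To obtain it, apply Theorem~\ref{mainthm} at $f^l(P)$, divide by~$d^l$, and rewrite as the second-order recursion
\[
  F_{l+1}-F_l\ge \tfrac{1}{dd'}(F_l-F_{l-1})+O(d^{-l}), \qquad F_l:=h(f^l(P))/d^l.
\]
Summing over~$l$ and letting $l\to\infty$ (legitimate by step one) gives a lower bound for $\widehat{h}_+(P)$ as an explicit combination of $h(P)$ and $h(f(P))$ of the form $\widehat{h}_+(P)\ge\frac{d'h(f(P))-h(P)}{dd'-1}+O(1)$; adding the symmetric bound for $\widehat{h}_-(P)$ and invoking Theorem~\ref{mainthm} one more time at~$P$ to bound $d'h(f(P))+dh(f^{-1}(P))\ge(dd'+1)h(P)+O(1)$ collapses the right-hand side to $h(P)-O(1)$. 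Combined with the trivial upper bound $\widehat{h}_\pm\le h+O(1)$, this shows that $\widehat{h}_++\widehat{h}_-$ and $h$ are commensurable, so $\widehat{h}_++\widehat{h}_-$ inherits the Northcott finiteness property.

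With this in hand, suppose $\widehat{h}(P)=0$. Using the transformation laws, the nonnegativity $\widehat{h}_\pm\ge 0$, and the Northcott bound applied to each point of the orbit, one squeezes out $\widehat{h}_+(P)=\widehat{h}_-(P)=0$; then $\widehat{h}_\pm(f^l(P))=0$ for every $l\in\zz$ by the transformation laws, so the Northcott bound forces $h(f^l(P))=O(1)$ uniformly in $l$. The orbit of $P$ is therefore a height-bounded subset of $\af^n(k)$ for $k=\mathbb{Q}(P)$, hence finite by Northcott's theorem, and since $f$ is an automorphism, $P$ is periodic. The main obstacle I anticipate is the Northcott lower bound itself: the telescoping of the second-order recursion must be controlled so that the error terms stay $O(1)$ uniformly in $P$, which works only because the coefficient $1+\frac{1}{dd'}$ in Theorem~\ref{mainthm} is exactly sharp.
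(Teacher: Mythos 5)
Your steps 1--3 and your derivation of the Northcott-type bound are correct, and they reconstruct exactly the machinery the paper is implicitly invoking: the paper itself gives no proof of this theorem, it simply cites Kawaguchi's Sections 4--5, whose argument is precisely your telescoping of the inequality of Theorem~\ref{mainthm}. In particular the recursion $F_{l+1}-F_l\ge\frac{1}{dd'}(F_l-F_{l-1})-Cd^{-l}$, its summation to $\widehat{h}_+(P)\ge\frac{d'h(f(P))-h(P)}{dd'-1}-O(1)$, the symmetric bound for $\widehat{h}_-$, and the final application of Theorem~\ref{mainthm} at $P$ to get $\widehat{h}_+(P)+\widehat{h}_-(P)\ge h(P)-O(1)$ all check out, as do the existence of the limits and the transformation laws (which yield the displayed functional equation under either sign convention).

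The gap is in the last ``squeeze.'' With $\widehat{h}=\widehat{h}_+-\widehat{h}_-$ as literally stated, the hypothesis $\widehat{h}(P)=0$ says only that $\widehat{h}_+(P)=\widehat{h}_-(P)=c\ge0$, and no combination of nonnegativity, the transformation laws, and the Northcott bound can force $c=0$: applied along the orbit the bound reads $d^l c+(d')^{-l}c\ge h(f^l(P))-C$, which is perfectly consistent with $c>0$ and an unbounded orbit, and $\widehat{h}(f^l(P))=c(d^l-(d')^{-l})$ need not vanish for $l\ne0$, so vanishing does not even propagate. Indeed the implication you are trying to prove is false for the difference: take the reversible H\'enon map $f(x,y)=(y,\,y^2-x)$ on $\af^2$, which is regular with $d=d'=2$ and satisfies $f^{-1}=\sigma\circ f\circ\sigma$ for $\sigma(x,y)=(y,x)$; since $h\circ\sigma=h$, every diagonal point has $\widehat{h}_+(P)=\widehat{h}_-(P)$, hence $\widehat{h}(P)=0$, yet $P=(3,3)$ has infinite forward orbit and is not periodic. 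The minus sign in the paper's statement is a typo for Kawaguchi's definition $\widehat{h}=\widehat{h}_++\widehat{h}_-$; with the sum, nonnegativity immediately gives $\widehat{h}_+(P)=\widehat{h}_-(P)=0$, and the remainder of your argument (transformation laws, the Northcott bound along the orbit, Northcott finiteness over the field of definition, injectivity of $f$) then completes the proof. So your write-up is correct only after that correction; as it stands, the deduction ``$\widehat{h}_+(P)=\widehat{h}_-(P)$ implies both vanish'' is the missing—and unfixable—step.
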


%%%%%%%%%%%%%%%%%%%%%%%%%%%%%%%%%%%%%%%%%%%%%%%%%%%%%%%%%%%%%%%%%%%%%%
\section{An Example}

An example of a H\'{e}non map on~$\af^n$, which is the regular affine
automorphism, is
\[
  f(x_1,\cdots,x_n) = (x_2, x_3+x_2^2, x_4+x_3^2, \cdots, x_n +
  x_{n-1}^2, x_1 + x_n^2).
\]
On $\af^2$, properties of H\'{e}non maps are well known, but less is
known for H\'{e}non maps on $\af^n$.  In this section we describe what
happens for a H\'{e}non map of degree 2 on $\af^3$.  Since the details
of the proof are somewhat lengthy, we just give a sketch and refer the
reader to~\cite{Lee} for details.

Let $f$ be a H\'{e}non map defined as
\[
  (x,y,z) = (y,z+y^2, x+z^2).
\]
It's an affine automorphism with inverse
\[
  f^{-1} = (z-(y-x^2), x, y-x^2)
\]
and meromorphic extensions
\begin{align*}
  \phi_0([x,y,z,w]) &= [yw,zw+y^2,xw+z^2,w^2],\\
  \psi_0([x,y,z,w]) &= [zw^3-(yw-x^2), xw^3, yw^3-x^2w^2,w^4].
\end{align*}
The map~$f$ is regular, since the indeterminacy loci are
$Z(\phi_0)=\{[1,0,0,0]\}$ and $Z(\psi_0)=\{[0,y,z,0]\}$.

After a number of blow-ups along subvarieties of dimension zero and one,
and using intersection theory to compute relevant coefficients, we
find that the resolution of~$f$ gives a map $\phi_V:V\to\pp^3$ with
\begin{align*}
  \pi_V^*H  &= H + E_1 + 2E_2 + 2E_3 + 4E_4 + 4E_5, \\
  \phi_V^*H &=  2H + E_1 + 2E_2 + E_3 + 2E_4 + E_5.
\end{align*}
Further,~$E_5$ is the essential exceptional divisor.
Similarly, resolving~$f^{-1}$ gives a map $\psi_W:W\to\pp^3$ with
\begin{align*}
  \pi_W^*(H) &= H + F_1 + 2F_3 + 2F_3 + 2F_4 + F_5 + 2F_6+ 2F_7, \\
  \psi_W^*(H) &= 4H + 2F_1 + 4F_2 + 2F_3 + F_4 + F_5 + 2F_6 + F_7.
\end{align*}
Combining the two resolutions to form a single variety that resolves
both~$f$ and~$f^{-1}$ yields
\begin{align*}
  \pi^*H &= H + 4E_5 + 2F_7 + (E_1 + 2E_2 + 2E_3 + 4E_4) + (F_1 +
      2F_2 + 2F_3 + F_5 + 2F_6 + 2F_7), \\
  \phi^*H &= 2(H+2F_7+(F_1 + 2F_2 + 2F_3 + F_5 + 2F_6 + 2F_7)) + E_5
      + (E_1 + 2E_2 + E_3 + 2E_4 ), \\
   \psi^*H &= 4(H+4E_5 + (E_1 + 2E_2 + 2E_3 + 4E_4)) + F_4 + (2F_1 +
       4F_2 + 2F_3 + F_5 + 2F_6 + F_7 ),
\end{align*}
and hence
\begin{align*}
   \frac{1}{2}\phi^*&H + \frac{1}{4}\psi^*H - \left(1+\dfrac{1}{8}\right)\pi^*H \\
   & = \dfrac{7}{8} H + \dfrac{3}{8}E_1 + \dfrac{3}{4}E_2 +
       \dfrac{1}{4}E_3 + \dfrac{1}{2}E_4 + 0E_5
       + \dfrac{3}{8}F_1 + \dfrac{3}{4}F_2 + \dfrac{1}{4}F_3 + 0F_4 +
       \dfrac{1}{8}F_5 + \dfrac{1}{4}F_6 + 0F_7 \\
  &\ge 0.
\end{align*}

%%%%%%%%%%%%%%%%%%%%%%%%%%%%%%%%%%%%%%%%%%%%%%%%%%%%%%%%%%%%%%%%%%%%%%

\end{document}